\documentclass[oneside,english]{amsart}
\usepackage[T1]{fontenc}
\usepackage[latin9]{inputenc}
\usepackage{amsthm}
\usepackage{amstext}
\usepackage{amssymb}

\makeatletter
\numberwithin{equation}{section}
\numberwithin{figure}{section}
\theoremstyle{plain}
\newtheorem{thm}{\protect\theoremname}[section]
  \theoremstyle{definition}
  \newtheorem{defn}[thm]{\protect\definitionname}
  \theoremstyle{plain}
  \newtheorem{lem}[thm]{\protect\lemmaname}
  \theoremstyle{plain}
  \newtheorem{cor}[thm]{\protect\corollaryname}
  \theoremstyle{remark}
  \newtheorem{rem}[thm]{\protect\remarkname}
  \theoremstyle{definition}
  \newtheorem{example}[thm]{\protect\examplename}


\AtBeginDocument{

}

\makeatother

\usepackage{babel}
  \providecommand{\corollaryname}{Corollary}
  \providecommand{\definitionname}{Definition}
  \providecommand{\examplename}{Example}
  \providecommand{\lemmaname}{Lemma}
  \providecommand{\remarkname}{Remark}
\providecommand{\theoremname}{Theorem}

\begin{document}
\vspace{0.5in}

\title{Image partition near an Idempotent }

\author{Tanushree Biswas}

\address{Department of Mathematics, University of Kalyani, Kalyani-741235,
West Bengal, India}

\email{tanushreebiswas87@gmail.com}

\keywords{Ramsey theory, Central sets near an idempotent, Image partition near
an idempotent.}
\begin{abstract}
\textsl{\emph{Some of the classical results of Ramsey Theory can be
naturally stated in terms of image partition regularity of matrices.
There have been many significant results of image partition regular
matrices as well as image partition regular matrices }}\textsl{near
zero.}\textsl{\emph{ Here, we are investigating image partition regularity
near an idempotent of an arbitrary Hausdorff semitopological semigroup
$\left(T,\,+\right)$ and a dense subsemigroup $S$ of $T$. We describe
some combinatorial applications on finite as well as infinite image
partition regular matrices based on the Central Sets Theorem near
an idempotent of $T$. }}
\end{abstract}
\maketitle

\section{Introduction}

One of the most famous results of the field of Ramsey Theory was given
by van der Waerden in the year 1927, guaranteeing monochromatic arithmetic
progressions.
\begin{thm}[van der Waerden]
\label{thm:(van-der-Waerden).}Let $r\in\mathbb{N}$ and\textbf{
}let $\mathbb{N}={\displaystyle \bigcup_{i=1}^{r}A_{i}}$ be partitioned
into finitely many cells. Then, for any arbitrary length $l$, there
exist $i\in\{1,\,2,\,\ldots\,,\, r\}$ and $a,\, d\in\mathbb{N}$
such that 

\[
a,\, a+d,\,\ldots\,,\, ld\in A_{i}.
\]
\end{thm}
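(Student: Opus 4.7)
The plan is to deduce Theorem~\ref{thm:(van-der-Waerden).} from its finitary form by a classical double induction. First I would reduce to the statement: for every $l, r \in \mathbb{N}$ there exists $W(l, r) \in \mathbb{N}$ such that any $r$-coloring of $[1, W(l, r)]$ contains a monochromatic arithmetic progression of length $l$. This is equivalent to the given formulation by a compactness argument, or simply by restricting the given partition of $\mathbb{N}$ to an initial segment.

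The outer induction is on $l$. The base case $l = 2$ is immediate pigeonhole. Assuming $W(l-1, r) < \infty$ for every $r$, I would then perform an inner induction on $k$ to establish: for each $1 \leq k \leq r$ there is $M(l, r, k)$ such that every $r$-coloring of $[1, M(l, r, k)]$ either already contains a monochromatic AP of length $l$ or contains a \emph{color-focused $k$-fan} of length-$(l-1)$ APs --- progressions $P_{1}, \ldots, P_{k}$ of length $l-1$ with distinct common differences $d_{1}, \ldots, d_{k}$, each $P_{i}$ monochromatic in a distinct color $c_{i}$, and all sharing a common focus $f = \max P_{i} + d_{i}$ lying inside the interval. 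The inner base case $k = 1$ follows from $W(l-1, r)$ applied to a slightly enlarged interval so that the focus still fits.

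For the inductive step $k \to k+1$, I would partition a sufficiently long interval into consecutive blocks of length $M(l, r, k)$ and view each block as one letter in an $r^{M(l,r,k)}$-coloring of the block indices. Applying $W(l-1, r^{M(l,r,k)})$ to this meta-coloring yields $l-1$ blocks in arithmetic progression with common difference $\Delta$ and identical internal colorings. Inside the first block there is a color-focused $k$-fan $P_{1}, \ldots, P_{k}$ with focus $\phi$. The new $(k+1)$-fan at the block-level focus $F = \phi + (l-1)\Delta$ consists of the $k$ \emph{diagonal} APs $R_{i}$ of common difference $d_{i} + \Delta$, where the $j$-th term of $R_{i}$ is the $j$-th term of $P_{i}$ shifted by $(j-1)\Delta$ into the $j$-th block --- so each $R_{i}$ is monochromatic in $c_{i}$ and focuses at $F$ --- together with the AP $\phi, \phi + \Delta, \ldots, \phi + (l-2)\Delta$ formed by the focal points of the per-block fans, which shares the color $c_{*}$ of $\phi$ and also focuses at $F$. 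If $c_{*} = c_{i}$ for some $i \leq k$, then $P_{i} \cup \{\phi\}$ is already a monochromatic AP of length $l$ inside the first block and we are done; otherwise $c_{*}$ contributes a genuinely new color and we have produced the desired $(k+1)$-fan. Iterating up to $k = r$ forces the focus $F$ to share one of the $r$ colors already present, yielding the sought AP of length $l$.

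The main obstacle will be the quantitative bookkeeping: ensuring that all focal points and the new APs actually remain inside the working interval at every stage, and tracking the (Ackermannian) growth of $W(l, r)$ through the nested inner and outer inductions. Once the finitary $W(l, r)$ is established, the infinite statement follows immediately by restricting the partition of $\mathbb{N}$ to $[1, W(l, r)]$.
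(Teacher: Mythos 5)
Your proposal is correct: it is the classical colour-focusing double induction (outer on the length $l$, inner on the number $k$ of focused progressions), which is essentially van der Waerden's original argument --- and the paper itself offers no proof beyond a citation to that 1927 source, so there is nothing to diverge from. The only caveat is the bookkeeping you already flag: the diagonal shift into the $j$-th block must be $(j-1)\Delta M$ with $M$ the block length (not $(j-1)\Delta$), and one should note that the displayed conclusion in the statement, ``$a,\, a+d,\,\ldots,\, ld$'', is evidently a typo for $a,\, a+d,\,\ldots,\, a+ld$, which is what your argument delivers.
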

\begin{proof}
See \cite{waerden1927van}
\end{proof}
These above classical results of Ramsey Theory can be also represented
in a following way . Given $u,\, v\in\mathbb{N}$ and a $u\times v$
matrix $M$ with non-negative integer entries, whenever $\mathbb{N}$
is finitely colored there must exist some $\vec{x}\in\mathbb{N}^{v}$
such that the entries of $M\vec{x}$ are monochromatic. The arithmetic
progression $\left\{ a,\, a+d,\, a+2d,\, a+3d\right\} $ in the van
der Waerden's Theorem for $l=3$ is precisely the set of entries of
\[
\left(\begin{array}{cc}
1 & 0\\
1 & 1\\
1 & 2\\
1 & 3
\end{array}\right)\cdot\left(\begin{array}{c}
a\\
d
\end{array}\right).
\]
 Also, Schur's Theorem \cite{schur1916uber} and Hilbert's Theorem
\textbf{\cite{hilbert1933irreduzibilitat}} (for $k=3$) can also
be written in the above manner using the matrices 
\[
\left(\begin{array}{cc}
1 & 0\\
0 & 1\\
1 & 1
\end{array}\right)
\]
 
\[
\left(\begin{array}{cccc}
1 & 1 & 0 & 0\\
1 & 0 & 1 & 0\\
1 & 1 & 1 & 0\\
1 & 0 & 0 & 1\\
1 & 1 & 0 & 1\\
1 & 0 & 1 & 1\\
1 & 1 & 1 & 1
\end{array}\right)
\]
 respectively.

These matrices are called \emph{image partition regular matrices.
}We give a proper definition below.
\begin{defn}
Let $u,\, v\in\mathbb{N}$, and let $M$ be a $u\times v$ matrix
. We say $M$ to be an image partition regular matrix over $\mathbb{N}$
if and only if whenever $r\in\mathbb{N}$ and $\mathbb{N}=\cup_{i=1}^{r}A_{i}$,
there exist $i\in\left\{ 1,\,2,\,\ldots\,,\, r\right\} $ and $\vec{x}\in\mathbb{N}{}^{v}$
such that $M\vec{x}\in A_{i}^{u}$. 
\end{defn}
Image partition regularity is one of the most important concepts of
Ramsey Theory. Similarly, image partition regularity near zero has
been defined over a dense subsemigroup $\left(\left(0,\infty\right),\,+\right)$.
Further, in \cite{hindman1999semigroup}, some algebraic results were
obtained on the Stone-\v{C}ech compactification of the ultrafilters
near zero defined over a dense subsemigroup $\left(\left(0,\infty\right),\,+\right)$.
Based on the results obtained in \cite{hindman1999semigroup}, the
authors in \cite{de2009image} investigated various combinatorial
properties partition regularity near zero. In this article, we are
trying to generalize the notion of \emph{near zero }into an arbitrary
idempotent using the idea of Stone-\v{C}ech compactification. We first
give a brief description of algebraic structure of $\beta T$.

Let $\left(T,\,+\right)$, be a discrete space. We get the Stone-\v{C}ech
compactification of $T$ by taking the points of $\beta T$ to be
the ultrafilters on $T$, identifying the principal ultrafilters with
the points of $T$ and thus pretending that $T\subseteq\beta T$.
Given $A\subseteq T$, $A=\overline{A}=\{p\in\beta T:\, A\in p\}$
forms a basis of open (close) sets for a topology on $\beta T$. The
operation $+$ on $T$ has a natural extension to the Stone-\v{C}ech
compactification $\beta T$ of $T$ and under this extended operation,
$(\beta T,+)$ is a compact right topological semigroup (meaning that,
for any $p\in\beta T$, the function $\rho_{p}:\beta T\rightarrow\beta T$
defined by $\rho_{p}(q)=q+p$ is continuous) with $T$ contained in
its topological center (meaning that, for any $x\in T$, the function
$\lambda_{x}:\beta T\rightarrow\beta T$ defined by $\lambda_{x}(q)=x+q$
is continuous). Given $p,q\in\beta T$ and $A\subseteq T$, $A\in p+q$
if and only if $\{x\in T:-x+A\in q\}\in p$, where $-x+A=\{y\in T:x+y\in A\}$. 

The set $0^{+}$ of all nonprincipal ultrafilters on $T=\left(\left(0,\,\infty\right),\,+\right)$
that converge to $0$ ($0$ being the idempotent in $T=\left(\left(0,\,\infty\right),\,+\right)$
) is a semigroup under the restriction of the usual operation $'+'$
on $\beta T$ , the Stone-\v{C}ech compactification of the discrete
semigroup $T=\left(\left(0,\,\infty\right),\,+\right)$ . The set
$0^{+}$ is nothing but ultrafilters near $0$. In \cite{hindman1999semigroup},
N. Hindman and I. Leader characterized the smallest ideal of $\left(0^{+},\,+\right)$,
its closure, and those sets \emph{central} in $\left(0^{+},\,+\right)$
, that is, those sets which are members of minimal idempotents in
$\left(0^{+},\,+\right)$. They derived new combinatorial applications
of those sets that are central in $\left(0^{+},\,+\right)$. Further,
in \cite{de2009image} and \cite{de2009sets}, De, Hindman and Strauss
worked out some more and much more elaborated applications of central
sets.

Later, Akbari Tootkaboni, M and Vahed, T, in \cite{akbari2012semigroup}
defines a semigroup of ultrafilters near an arbitrary idempotent (instead
of taking the particular $0$). We give basic definitions and results
which will be required throughout the paper.
\begin{defn}
Let $\left(T,\,+\right)$ be a semitopological semigroup and $S$
be a dense subsemigroup of $T$. Let $E\left(T\right)$ be the set
consisting of idempotents of $T$. Then 
\begin{enumerate}
\item Given $e\in E(T)$, $e^{*}=\left\{ p\in\beta T_{d}\colon\, e\in\cap_{A\in p}\overline{A}\right\} $
\item Given $x\in T$, $x_{S}^{*}=\left\{ p\in\beta S_{d}\colon\, x\in\cap_{A\in p}\overline{A}\right\} $
\end{enumerate}
\end{defn}
In \cite{akbari2012semigroup}, Akbari Tootkaboni, M and Vahed, T,
defined ultrafilters near a point and showed that if $S$ is a dense
subsemigroup of a semitopological semigroup $\left(T,\,+\right)$and
if $e\in T$ is an idempotent, then $e_{S}^{*}$ forms a compact right
topological semigroup.

As a compact right topological semigroup, $e_{S}^{*}$ has a smallest
two-sided ideal \cite[Theorem 1.3.11]{berglund1989analysis} that
contains idempotents \cite[Corollary 2.10]{ellis1969lectures}. They
characterized the members of the smallest ideal of $\left(e^{*},\,+\right)$,
its closure and also those subsets of $T$ that have idempotents in
$\left(e^{*},\,+\right)$ in their closure. They also defined the
sets central near $e$, and derived various combinatorial applications
from it. 

Before moving further, we need some basic definitions and results
based on which our article stands. Note that throughout this paper,
$\left(T,\,+\right)$ denotes a Hausdorff semitopological semigroup
and $S$ is a dense subsemigroup of $T$. $E\left(T\right)$ denotes
the collection of all idempotents in $T$ . For every $x\in T$, $\tau_{x}$
denotes the collection of all neighborhoods of $x$ , where a set
$U$ is called a neighborhood of $x\in T$ if $x\in int_{T}\left(U\right)$.
For a set $A$, we write $\mathcal{P}_{f}\left(A\right)$ for the
set of finite nonempty subsets of $A$ and $\mathcal{P}\left(A\right)$
denote the collection of all subsets of $A$. 
\begin{defn}
Let $\left\{ x_{n}\right\} $ be a sequence in $S$. Then
\[
\mbox{FS}\left(\left\langle x_{n}\right\rangle _{n=1}^{\infty}\right)=\left\{ \sum_{n\in F}x_{n}\colon\, F\,\mbox{is\,\ a}\,\mbox{finite\,\ nonempty\,\ subset\,\ of}\,\mathbb{N}\right\} 
\]
is called \emph{finite sum }of the sequence\textbf{ $x_{n}$}.
\end{defn}

\begin{defn}
Let $\left(T,\,+\right)$ be a semitopological semigroup.
\begin{enumerate}
\item Let $\mathcal{B}$ be a local base at the point $x\in T$. We say
$\mathcal{B}$ has the finite cover property if $\left\{ V\in\mathcal{B}\colon\, y\in V\right\} $
is finite for each $y\in T\setminus\left\{ x\right\} $.
\item Let $S$ be a dense subsemigroup of $T$ and $e\in E\left(T\right)$.
Let $\left\{ x_{n}\right\} $ be a sequence in $S$. We say $\sum_{n=1}^{\infty}x_{n}$
converges near $e$ if for each $U\in\tau_{e}$ there exists $m\in\mathbb{N}$
such that $FS\left(\left\langle x_{n}\right\rangle _{n=k}^{l}\right)\subseteq U$
for each $l>k\geq m$.
\item Let $\mathcal{B}=\left\{ U_{n}\colon\, n\in\mathbb{N}\right\} $ be
a countable local base at the point $x\in T$ such that $U_{n+1}\subseteq U_{n}$
for each $n\in\mathbb{N}$, $U_{n+1}+U_{n+1}\subseteq U_{n}$, and
for each sequence $\left\{ x_{n}\right\} _{n=1}^{\infty}$ if $x_{n}\in U_{n}$
for each $n\in\mathbb{N}$ then $\sum_{n=1}^{\infty}x_{n}$ converges
near $x$. Then we say $\mathcal{B}$ is a countable local base for
convergence at the point $x\in T$ . 
\item Let $\mathcal{B}$ be a local base at the point $x\in T$ . If $\mathcal{B}$
satisfies in conditions $\textbf{\ensuremath{\left(a\right)}}$ and
$\textbf{\ensuremath{\left(c\right)}}$ then $\mathcal{B}$ is called
a countable local base that has the finite cover property for convergence
at the point $x$. For simplicity we say $\mathcal{B}$ has the \textbf{F}
property at the point $x$. 
\end{enumerate}
\end{defn}
$ $
\begin{thm}
Let $\left(T,\,+\right)$ be a semitopological semigroup and $S$
be a dense subsemigroup of $T$, $e\in E\left(T\right)$ and $\mathcal{B}=\left\{ U_{n}\right\} _{n=1}^{\infty}$
the \textbf{F} property at the point $e$. Then there exists $p=p+p$
in $e_{S}^{*}$ with $A\in p$ if and only if there is some sequence
$\left\langle x_{n}\right\rangle _{n=1}^{\infty}$ in $S$ such that
$\lim_{n\to\infty}x_{n}=e$, $\sum_{n=1}^{\infty}x_{n}$ converges
near $e$ and $FS\left(\left\langle x_{n}\right\rangle _{n=1}^{\infty}\right)\subseteq A$.\end{thm}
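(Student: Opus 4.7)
The statement is the analog of the Galvin--Glazer--Hindman characterization of sets belonging to an idempotent ultrafilter, transferred to the ``near $e$'' semigroup $e_S^*$. I will prove the two directions separately; the forward direction is an inductive construction exploiting the hypothesis $p=p+p$, and the backward direction is an Ellis-theorem argument applied to the closure of the tail finite-sum sets.

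\smallskip
\noindent\emph{Necessity.} Assume $p=p+p\in e_S^*$ and $A\in p$. For any $B\in p$ write $B'=\{x\in B:\,-x+B\in p\}$; the standard Galvin--Glazer argument gives $B'\in p$, and one further checks that $-x+B'\in p$ for every $x\in B'$. Because $p\in e_S^*$, every neighborhood of $e$ meets every member of $p$, so $A'\cap U_1\neq\emptyset$; choose $x_1\in A'\cap U_1$. Proceeding inductively, once $x_1,\ldots,x_n\in S$ have been chosen so that $x_k\in U_k$ and $\sum_{i\in F}x_i\in A'$ for every nonempty $F\subseteq\{1,\ldots,n\}$, set
\[
B_n=A'\cap\bigcap_{\emptyset\neq F\subseteq\{1,\ldots,n\}}\bigl(-{\textstyle\sum_{i\in F}}x_i+A'\bigr),
\]
which lies in $p$, and pick $x_{n+1}\in B_n\cap U_{n+1}$. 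Since $\{U_n\}$ is a nested local base at $e$, the condition $x_n\in U_n$ forces $x_n\to e$; convergence of $\sum x_n$ near $e$ is immediate from clause (c) of the F property; and $FS(\langle x_n\rangle)\subseteq A'\subseteq A$ by construction.

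\smallskip
\noindent\emph{Sufficiency.} Given a sequence $\langle x_n\rangle$ with the stated properties, set
\[
K=\bigcap_{m=1}^{\infty}\overline{FS(\langle x_n\rangle_{n=m}^{\infty})}\subseteq\beta S_d,
\]
where the closures are taken in $\beta S_d$. As a nested intersection of nonempty closed subsets of the compact space $\beta S_d$, $K$ is nonempty and compact. To see that $K$ is a subsemigroup of $\beta S_d$, fix $p,q\in K$ and $m\in\mathbb{N}$; for any $x=\sum_{i\in F}x_i$ with $F\subseteq\{m,m+1,\ldots\}$ finite, the tail $FS(\langle x_n\rangle_{n=\max F+1}^{\infty})$ lies in $q$ and is contained in $-x+FS(\langle x_n\rangle_{n=m}^{\infty})$. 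Hence $FS(\langle x_n\rangle_{n=m}^{\infty})\subseteq\{y\in S:\,-y+FS(\langle x_n\rangle_{n=m}^{\infty})\in q\}$; as the former set belongs to $p$, the latter does too, giving $FS(\langle x_n\rangle_{n=m}^{\infty})\in p+q$. To see $K\subseteq e_S^*$, take $p\in K$ and $U\in\tau_e$; convergence of $\sum x_n$ near $e$ furnishes $m$ with $FS(\langle x_n\rangle_{n=m}^{\infty})\subseteq U$, and since this tail belongs to $p$, it meets every $B\in p$, whence $e\in\overline{B}$ for every $B\in p$.

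\smallskip
By Ellis' theorem, the compact right topological semigroup $K$ contains an idempotent $p=p+p$. Since $K\subseteq e_S^*$ and $FS(\langle x_n\rangle_{n=1}^{\infty})\subseteq A$ is a member of every ultrafilter in $K$, this $p$ is the required idempotent. The subtlest point is the inclusion $K\subseteq e_S^*$: it is not enough that $x_n\to e$, one genuinely needs the uniform convergence of $\sum x_n$ near $e$ so that entire tail finite-sum sets -- not merely their individual summands -- fit inside an arbitrary neighborhood of $e$.
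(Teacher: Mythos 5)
Your proof is correct: both directions are the standard Galvin--Glazer--Hindman/Ellis argument correctly transferred to $e_S^*$, with the two genuinely ``near $e$'' ingredients handled properly (membership in $e_S^*$ lets you intersect each $B_n$ with $U_{n+1}$, and clause (c) of the \textbf{F} property converts $x_n\in U_n$ into convergence of $\sum x_n$ near $e$, which in turn is exactly what puts the tail $FS$-sets inside arbitrary neighborhoods of $e$ and hence forces $K\subseteq e_S^*$). The paper itself gives no proof, only the citation to Theorem 3.2 of the Akbari Tootkaboni--Vahed paper, and your argument is precisely the one that source carries out, so there is nothing to flag.
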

\begin{proof}
\cite[Theorem 3.2]{akbari2012semigroup}.\end{proof}
\begin{defn}
Let $S$ be a dense subsemigroup of $\left(T,\,+\right)$ and $e\in E\left(T\right)$.
\begin{enumerate}
\item $K$ is the smallest ideal of $e_{S}^{*}$.
\item A subset $B$ of $S$ is \emph{syndetic near $e$} if and only if
for each $U\in\tau_{e}$ , there exist some $F\in\mathcal{P}_{f}\left(U\cap S\right)$
and some $V\in\tau_{e}$ such that $S\cap V\subseteq\cup_{t\in F}\left(-t+B\right)$
. 
\item A set $A\subseteq S$ is central near $e$ if and only if there is
some idempotent $p\in K$ with $A\in p$. 
\end{enumerate}
\end{defn}

\begin{defn}
Let $\left(T,\,+\right)$ be a semitopological semigroup and $S$
be a dense subsemigroup of $T$. Let $\mathcal{B}=\left\{ U_{n}\right\} _{n=1}^{\infty}$has
the \textbf{F }property at the point $e\in E\left(T\right)$.
\begin{enumerate}
\item $\Phi=\left\{ f\colon\,\mathbb{N}\to\mathbb{N}\colon\,\forall n\in\mathbb{N},\, f\left(n\right)\leq n\right\} $.
\item $\mathcal{Y}=\left\langle \left\langle y_{i,\, t}\right\rangle _{t=1}^{\infty}\right\rangle _{i=1}^{\infty}$such
that for each $i\in\mathbb{N},\,\left\langle y_{i,\, t}\right\rangle _{t=1}^{\infty}\in S$
is a sequence for which $\sum_{t=1}^{\infty}y_{i,\, t}$ converges.
\item Given $Y=\left\langle \left\langle y_{i,\, t}\right\rangle _{t=1}^{\infty}\right\rangle _{i=1}^{\infty}\in\mathcal{Y}$
and $A\subseteq S$, $A$ is a $J_{Y}-set$ near $e$ if and only
if $\forall n\in\mathbb{N}$ there exist $a\in U_{n}$ and $H\in\mathcal{P}_{f}\left(\mathbb{N}\right)$
such that $min\, H_{n}\geq n$ and for each $i\in\left\{ 1,\,2,\,\ldots,\, n\right\} $,
$a+\sum_{t\in H}y_{i,\, t}\in A$. 
\item Given $Y\in\mathcal{Y}$, $J_{Y}=\left\{ p\in e_{S}^{*}\colon\,\forall A\in p,\, A\, is\, a\, J_{Y}-set\, near\, e\right\} $.
\item $J=\cap_{Y\in\mathcal{Y}}J_{Y}$.
\end{enumerate}
\end{defn}
\begin{lem}
\label{lemmaKinJ}Let $\left(T,\,+\right)$ be a commutative semitopological
semigroup and $S$ be a dense subsemigroup of $T$ . Let $\mathcal{B}=\left\{ U_{n}\right\} _{n=1}^{\infty}$
has the \textbf{F }property at the point $e\in E\left(T\right)$.
Let $Y\in\mathcal{Y}$. Then $K\subseteq J_{Y}$. \end{lem}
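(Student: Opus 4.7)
The plan is to prove this by showing that $J_Y$ is a two-sided ideal of the compact right topological semigroup $e_S^*$ and that it is nonempty; the inclusion $K\subseteq J_Y$ will then follow at once from the fact that $K$ is the smallest two-sided ideal of $e_S^*$.

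First I would check that $J_Y$ is closed under right multiplication by $e_S^*$. Fix $p\in J_Y$, $q\in e_S^*$ and $A\in p+q$, so that the set $C:=\{s\in S:-s+A\in q\}$ lies in $p$ and hence is a $J_Y$-set near $e$. Given $n$, apply the $J_Y$-property of $C$ at parameter $n+1$ to obtain $a\in U_{n+1}$ and $H\in\mathcal{P}_{f}(\mathbb{N})$ with $\min H\geq n+1$ such that $c_{i}:=a+\sum_{t\in H}y_{i,t}\in C$ for every $i\leq n+1$. For $i\leq n$ this gives $-c_{i}+A\in q$, while $U_{n+1}\cap S\in q$ because $q\in e_S^*$ (every neighborhood of $e$ meets every member of $q$). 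Therefore $(U_{n+1}\cap S)\cap\bigcap_{i=1}^{n}(-c_{i}+A)$ belongs to $q$ and is nonempty, and I pick any $b$ in it. Commutativity of $T$ rearranges $c_{i}+b=(a+b)+\sum_{t\in H}y_{i,t}\in A$ for every $i\leq n$, and the \textbf{F} property yields $a+b\in U_{n+1}+U_{n+1}\subseteq U_{n}$; so $a':=a+b$ and the same $H$ witness $A$ as a $J_Y$-set at level $n$.

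The left-ideal check is parallel: for $p\in e_S^*$, $q\in J_Y$ and $A\in p+q$, use $p\in e_S^*$ to choose $s\in\{t\in S:-t+A\in q\}\cap U_{n+1}$, which is nonempty because both factors belong to $p$. Then $-s+A\in q\in J_Y$ is itself a $J_Y$-set, so at parameter $n+1$ one finds $a\in U_{n+1}$ and $H$ with $\min H\geq n+1$ and $a+\sum_{t\in H}y_{i,t}\in -s+A$ for $i\leq n+1$. Associativity gives $(s+a)+\sum_{t\in H}y_{i,t}\in A$ for every $i\leq n$, while the \textbf{F} property produces $s+a\in U_{n+1}+U_{n+1}\subseteq U_{n}$, so $a':=s+a$ and $H$ witness $A$ as a $J_Y$-set at level $n$.

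The main obstacle is showing that $J_Y$ is nonempty. For this I would exhibit a minimal idempotent $r\in e_S^*$ (which exists because $e_S^*$ is a compact right topological semigroup) and argue directly that $r\in J_Y$. To see that every $A\in r$ is a $J_Y$-set near $e$, I would use the relation $r=r+r$ iteratively to solve memberships of the form $-s+A\in r$ along a sequence in $S$ converging to $e$, combined with the sequence characterization of idempotents in $e_S^*$ supplied by \cite[Theorem 3.2]{akbari2012semigroup}, so as to assemble a single $a\in U_n$ and $H\in\mathcal{P}_{f}(\mathbb{N})$ with $\min H\geq n$ simultaneously placing each of the finitely many partial sums $a+\sum_{t\in H}y_{i,t}$ into $A$ for $i\leq n$. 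Once $r\in J_Y$ is in hand, commutativity forces $K=r+e_S^*$, and the right-ideal property of $J_Y$ already established yields $K\subseteq J_Y$.
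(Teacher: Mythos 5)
The paper does not actually prove this lemma; it cites \cite[Lemma 4.2]{akbari2012semigroup}, so your attempt has to be measured against the standard argument behind that result. Your reduction is the right one in outline, and the two ideal computations are correct and cleanly done: for the right-ideal step you correctly use that $U_{n+1}\cap S\in q$ for every $q\in e_{S}^{*}$, commutativity to move $b$ next to $a$, and $U_{n+1}+U_{n+1}\subseteq U_{n}$ to land $a+b$ in $U_{n}$; the left-ideal step is fine as well. Granting those, $K\subseteq J_{Y}$ does follow once $J_{Y}\neq\emptyset$ (and note you do not need the false claim $K=r+e_{S}^{*}$; the smallest two-sided ideal is contained in every nonempty two-sided ideal, full stop --- in general $K=e_{S}^{*}+r+e_{S}^{*}$, not $r+e_{S}^{*}$).

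The genuine gap is the nonemptiness step, which is the entire substance of the lemma. You propose to show that a minimal idempotent $r$ lies in $J_{Y}$ by iterating $r=r+r$ and invoking the $FS$-characterization of members of idempotents. This cannot work: that machinery only produces configurations $a+\sum_{t\in H}x_{t}$ built from a sequence $\left\langle x_{t}\right\rangle$ extracted \emph{from} $A$, whereas a $J_{Y}$-set must absorb translates $a+\sum_{t\in H}y_{i,t}$ of the \emph{prescribed, arbitrary} sequences $\left\langle y_{i,t}\right\rangle$ coming with $Y$, simultaneously for $i=1,\ldots,n$. Nothing in the $FS$-structure of $A$ controls these sums. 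Worse, your sketch never actually uses minimality of $r$, so if it worked it would show every idempotent of $e_{S}^{*}$ is in $J_{Y}$, which is false already in the classical setting (non-minimal idempotents of $\beta\mathbb{N}$ need not have all members $J$-sets). The standard proof that elements of $K$ lie in $J_{Y}$ goes through the product semigroup $\left(\beta S_{d}\right)^{n+1}$: one forms, for each $m$, the closure of $\left\{ \left(a,\, a+\sum_{t\in H}y_{1,t},\,\ldots,\, a+\sum_{t\in H}y_{n,t}\right)\colon a\in U_{m}\cap S,\,\min H\geq m\right\}$, intersects over $m$ to get an ideal of a suitable subsemigroup containing the diagonal copy of $e_{S}^{*}$, and uses minimality of $p$ to force $\left(p,\ldots,p\right)$ into that ideal, whence $\overline{A}^{\, n+1}$ meets the displayed set. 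Some argument of this type (or an equivalent Ramsey-theoretic input) is indispensable, and your proposal does not supply it.
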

\begin{proof}
\cite[Lemma 4.2]{akbari2012semigroup}. 
\end{proof}
Due to the above \ref{lemmaKinJ}, the authors, in \cite{akbari2012semigroup}
were able to show that the sets that are central near an idempotent
satisfy a version of the Frustenberg's Central Sets Theorem\textbf{
\cite{furstenberg1981recurrence}} which is as follows.
\begin{thm}
\label{eCntrlThm} Let $\left(T,\,+\right)$ be a semitopological
semigroup and $S$ be a dense subsemigroup of $T$. Let $\mathcal{B}=\left\{ U_{n}\right\} _{n=1}^{\infty}$has
the \textbf{F }property at the point $e\in E\left(T\right)$ and $A$
be a central set near $e$. Suppose that $Y=\left\langle \left\langle y_{i,\, t}\right\rangle _{t=1}^{\infty}\right\rangle _{i=1}^{\infty}\in\mathcal{Y}$
. Then there exist sequences $\left\langle a_{n}\right\rangle _{n=1}^{\infty}$in
$S$ $\left\langle H_{n}\right\rangle _{n=1}^{\infty}$in $\mathcal{P}_{f}\left(\mathbb{N}\right)$
such that 
\begin{enumerate}
\item for each $n\in\mathbb{N}$, $a_{n}\in U_{n}$ and $max\, H_{n}<min\, H_{n+1}$.
\item for each $f\in\Phi$, $\mbox{FS}\left(\left\langle a_{n}+\sum_{t\in H_{n}}y_{f\left(n\right),\, t}\right\rangle _{n=1}^{\infty}\right)\subseteq A$. 
\end{enumerate}
\end{thm}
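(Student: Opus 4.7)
The plan is to run the standard ultrafilter induction driven by a minimal idempotent, adapting the proof of Furstenberg's Central Sets Theorem to the ``near $e$'' setting. Since $A$ is central near $e$, there is by definition a minimal idempotent $p\in K$ with $A\in p$. By Lemma~\ref{lemmaKinJ}, $p\in J_Y$, so every member of $p$ is a $J_Y$-set near $e$. Setting
\[
A^{\star}=\{x\in A\colon -x+A\in p\},
\]
the standard ultrafilter arithmetic (which remains available because $e_S^{*}$ sits inside $\beta T$ under the extended operation) yields $A^{\star}\in p$ and $-x+A^{\star}\in p$ for every $x\in A^{\star}$.

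Next I would construct $\langle a_n\rangle_{n=1}^{\infty}$ and $\langle H_n\rangle_{n=1}^{\infty}$ by induction on $n$, maintaining the hypothesis that for every nonempty $F\subseteq\{1,\ldots,n\}$ and every $f\colon F\to\mathbb{N}$ with $f(j)\leq j$ the partial sum $\sum_{j\in F}\bigl(a_j+\sum_{t\in H_j}y_{f(j),t}\bigr)$ lies in $A^{\star}$. At the inductive step, let $E_n$ denote the finite collection of all such partial sums obtained through stage $n$ and put
\[
B_{n+1}=A^{\star}\cap\bigcap_{s\in E_n}(-s+A^{\star}).
\]
Each translate belongs to $p$, so $B_{n+1}\in p$ and is therefore a $J_Y$-set near $e$. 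Taking $N=\max\bigl(n+1,\max H_n+1\bigr)$ and applying the $J_Y$-set condition to $B_{n+1}$ at parameter $N$ produces $a_{n+1}\in U_{N}\subseteq U_{n+1}$ together with $H_{n+1}\in\mathcal{P}_f(\mathbb{N})$ satisfying $\min H_{n+1}\geq N>\max H_n$ and $a_{n+1}+\sum_{t\in H_{n+1}}y_{i,t}\in B_{n+1}$ for every $i\in\{1,\ldots,N\}$. Since $N\geq n+1$, every admissible value $f(n+1)\leq n+1$ is covered, and adjoining $a_{n+1}+\sum_{t\in H_{n+1}}y_{f(n+1),t}$ to any previous partial sum lands back inside $A^{\star}$ thanks to the translate condition encoded in $B_{n+1}$, so the inductive hypothesis is preserved.

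Conclusion~(a) is built into the construction, and conclusion~(b) then follows by applying the inductive hypothesis to each nonempty finite $F\subseteq\mathbb{N}$ and each $f\in\Phi$ restricted to $F$, giving $\mbox{FS}\bigl(\langle a_n+\sum_{t\in H_n}y_{f(n),t}\rangle_{n=1}^{\infty}\bigr)\subseteq A^{\star}\subseteq A$. The main obstacle I anticipate is verifying that the identities $A^{\star}\in p$ and $-x+A^{\star}\in p$ for $x\in A^{\star}$ continue to hold when $p\in e_S^{*}$ rather than $p\in\beta S$; this goes through because the restriction of the extended operation on $\beta T$ to $e_S^{*}$ is still a semigroup operation with the same formal properties used in the classical proof, so Lemma~\ref{lemmaKinJ} remains the only genuinely new ingredient beyond Furstenberg's original argument.
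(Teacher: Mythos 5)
Your argument is correct and is essentially the proof the paper points to (the paper does not reproduce a proof here, it only cites Theorem 4.11 of \cite{hindman1999semigroup}, i.e.\ Theorem 4.3 of \cite{akbari2012semigroup}): a minimal idempotent $p\in K$ with $A\in p$, the inclusion $K\subseteq J_{Y}$ from Lemma \ref{lemmaKinJ}, the star-set $A^{\star}$ with $-x+A^{\star}\in p$ for $x\in A^{\star}$, and the inductive application of the $J_{Y}$-set property with the parameter pushed up to $\max(n+1,\max H_{n}+1)$ to secure both $a_{n+1}\in U_{n+1}$ and $\max H_{n}<\min H_{n+1}$. The one point worth flagging is that Lemma \ref{lemmaKinJ} as stated requires $(T,+)$ to be commutative, a hypothesis the theorem omits but which your proof (and the paper's source) implicitly inherits.
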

\begin{proof}
See Theorem 4.11 of \cite{hindman1999semigroup}.
\end{proof}
Akbari Tootkaboni, M and Vahed, T gave an example of one of many possible
combinatorial applications based on \ref{eCntrlThm}.
\begin{cor}
Let $\left(T,\,+\right)$ be a semitopological semigroup and $S$
be a dense subsemigroup of $T$. Let $\mathcal{B}=\left\{ U_{n}\right\} _{n=1}^{\infty}$has
the \textbf{F }property at the point $e\in E\left(T\right)$ and $\left\langle x_{n}\right\rangle _{n=1}^{\infty}$
be a sequence in $S$ such that $\lim_{n\to\infty}x_{n}=e$. Assume
$r\in\mathbb{N}$ and $S=\cup_{i=1}^{r}A_{i}$. Then there is some
$i\in\left\{ 1,\,2,\,\ldots,\, r\right\} $ such that for every $U\in\tau_{e}$
and every $l\in\mathbb{N}$ there is an arithmetic progression $\left\{ a,\, a+d,\,\ldots,\, a+ld\right\} \subseteq A_{i}\cap U$
with increment $d\in FS\left(\left\langle x_{n}\right\rangle _{n=1}^{\infty}\right)$.\end{cor}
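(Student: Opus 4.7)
The plan is to exhibit one cell $A_{i_0}$ that is central near $e$ and then build the required arithmetic progression by applying the Central Sets Theorem to a carefully chosen array in $\mathcal{Y}$. Since $e_S^{*}$ is a compact right topological semigroup, its minimal ideal $K$ contains an idempotent $p$; because $S=\bigcup_{i=1}^{r}A_{i}$ belongs to $p$, some cell $A_{i_{0}}$ lies in $p$ and is therefore central near $e$. This $i_{0}$ is the index whose existence the corollary asserts.

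Now fix $U\in\tau_{e}$ and $l\in\mathbb{N}$. Using $\lim_{n}x_{n}=e$, thin $\langle x_{n}\rangle$ to a subsequence $\langle x_{n_{k}}\rangle_{k=1}^{\infty}$ with $x_{n_{k}}\in U_{k}$; the \textbf{F} property at $e$ then forces $\sum_{k}x_{n_{k}}$ to converge near $e$. Define the array
\[
y_{i,t}:=\underbrace{x_{n_{t}}+\cdots+x_{n_{t}}}_{i\text{ copies}}\qquad(i,t\in\mathbb{N}).
\]
Each $y_{i,t}\in S$ because $S$ is a subsemigroup, and, for each fixed $i$, $\sum_{t}y_{i,t}=i\cdot\sum_{t}x_{n_{t}}$ converges near $e$ (by commutativity and continuity, together with $i\cdot e=e$, which follows from $e+e=e$). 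Hence $Y=\langle\langle y_{i,t}\rangle_{t}\rangle_{i}\in\mathcal{Y}$, so Theorem \ref{eCntrlThm} applied to the central set $A_{i_{0}}$ and this $Y$ produces $\langle a_{n}\rangle\subseteq S$ and $\langle H_{n}\rangle\subseteq\mathcal{P}_{f}(\mathbb{N})$ with $a_{n}\in U_{n}$, $\max H_{n}<\min H_{n+1}$ (so $\min H_{n}\geq n$), and $\mathrm{FS}\bigl(\langle a_{n}+\sum_{t\in H_{n}}y_{f(n),t}\rangle_{n}\bigr)\subseteq A_{i_{0}}$ for every $f\in\Phi$.

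It then remains to choose $n$ and read off the progression. Take $n$ large enough that (i) $d:=\sum_{t\in H_{n}}x_{n_{t}}$ lies in $U_{n}$, which can be arranged by pushing $\min H_{n}$ past the tail at which $\sum x_{n_{k}}$ is absorbed into $U_{n}$, and (ii) iterating $U_{m+1}+U_{m+1}\subseteq U_{m}$ places any sum of up to $l+2$ elements of $U_{n}$ inside $U$. Using commutativity to rewrite $\sum_{t\in H_{n}}y_{j,t}=jd$, and applying the CST conclusion to the constant-$j$ functions $f_{j}(m):=\min(j,m)\in\Phi$ for $j=1,\ldots,l+1$, yields $a_{n}+jd\in A_{i_{0}}$ for each such $j$. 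Setting $a:=a_{n}+d$, the set $\{a,a+d,\ldots,a+ld\}$ equals $\{a_{n}+jd:1\leq j\leq l+1\}\subseteq A_{i_{0}}\cap U$ by (ii), with increment $d\in\mathrm{FS}(\langle x_{k}\rangle_{k=1}^{\infty})$ because $\{n_{t}:t\in H_{n}\}\subseteq\mathbb{N}$. I expect the main obstacle to be precisely this neighborhood bookkeeping — coordinating the depth of $U_{n}$, the lower bound on $\min H_{n}$, and the nested-addition axiom so that both $d$ and all $l+2$ progression terms remain inside $U$; the combinatorial content of the argument is supplied directly by Theorem \ref{eCntrlThm}.
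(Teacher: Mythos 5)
Your argument is correct and follows essentially the same route as the proof the paper cites (Corollary 5.1 of Hindman--Leader): fix a cell central near $e$, feed the array $y_{i,t}=i\cdot x_{n_t}$ into the Central Sets Theorem near $e$ (Theorem \ref{eCntrlThm}), extract $a_n+jd\in A_{i_0}$ for $j=1,\dots,l+1$ via the functions $f_j(m)=\min(j,m)\in\Phi$, and handle the neighborhood bookkeeping with the nesting $U_{m+1}+U_{m+1}\subseteq U_m$. The one caveat is that rewriting $\sum_{t\in H_n}y_{j,t}$ as $jd$ uses commutativity of $S$, a hypothesis the corollary's statement omits but which is already implicitly required by Lemma \ref{lemmaKinJ} underlying Theorem \ref{eCntrlThm}, so this is the paper's looseness rather than a gap in your argument.
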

\begin{proof}
\cite[Corollary 5.1]{hindman1999semigroup}.
\end{proof}
In our research article, we try to bring out some combinatorial properties
of image partition regularity near an idempotent of a semitopological
semigroup.

\section{Finite Matrices}

First, we shall investigate on some image partition regularity near
$e$ on finite matrices. Then we will move on some infinite ones.
\begin{defn}
Let $S$ be a dense subsemigroup of a semitopological semigroup $\left(T,\,+\right)$.
Let $\mathcal{B}=\left\{ U_{n}\right\} _{n=1}^{\infty}$has the \textbf{F
}property at the point $e\in E\left(T\right)$. Let $M$ be a $u\times v\in\mathbb{N}$
with entries from $\mathbb{Q}$. Then $M$ is image partition regular
over $S$ near $e$ (abbreviated $IPR/S_{e}$) if and only if, whenever
$S=\cup_{i=1}^{r}A_{i}$ for some $r\in\mathbb{N}$, there is some
$i\in\left\{ 1,\,2,\,\ldots,\, r\right\} $ such that for every $U\in\tau_{e}$
there exists $\vec{x}\in S^{v}$ with $\lim_{n\to\infty}x_{n}=e$
for which $M\vec{x}\in A_{i}\cap U$. \end{defn}
\begin{lem}
Let $A$ be a $u\times v$ matrix which is image partition regular
over $\mathbb{N}$. Then it is image partition regular near any $e\in E\left(T\right)$
over any dense subsemigroup $S$ of semitopological semigroup $T$
.\end{lem}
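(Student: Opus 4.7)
The strategy is to reduce the lemma to two standard ingredients: the partition regularity of the family of sets central near $e$, and the Central Sets Theorem near $e$ (Theorem \ref{eCntrlThm}), which will play the role that the ordinary Central Sets Theorem plays in the classical Hindman--Leader argument that every image partition regular matrix over $\mathbb{N}$ admits images inside every central subset of $\mathbb{N}$. Given a finite coloring $S=\bigcup_{i=1}^{r}A_i$, pick a minimal idempotent $p\in K\subseteq e_S^*$ and let $A_{i_0}$ be the unique color class with $A_{i_0}\in p$; by definition $A_{i_0}$ is central near $e$. Since $p$ converges to $e$, every $U\in\tau_e$ satisfies $U\cap S\in p$, so $C:=A_{i_0}\cap U$ is also central near $e$. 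It therefore suffices to establish the following claim: \emph{for every set $C$ central near $e$ there exists $\vec{x}\in S^v$ with $A\vec{x}\in C^u$, with both $\vec{x}$ and $A\vec{x}$ placeable into any prescribed neighborhood of $e$.}

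To prove the claim I would imitate the classical construction of Hindman and Leader. First clear denominators by an integer $N$ so that $NA$ has integer entries, then reduce $NA$ to ``first entries form'' via invertible rational column operations (standard preprocessing in the IPR theory, preserving image partition regularity). Next choose a family $Y=\langle\langle y_{i,t}\rangle_{t=1}^{\infty}\rangle_{i=1}^{\infty}\in\mathcal{Y}$ of sequences in $S$, one per column of the transformed matrix, with $y_{i,t}\in U_t\cap S$, so that $\sum_{t}y_{i,t}$ converges near $e$ by the \textbf{F} property and hence $Y\in\mathcal{Y}$. Apply Theorem \ref{eCntrlThm} to $C$ and $Y$ to produce $\langle a_n\rangle$ and $\langle H_n\rangle$ with $a_n\in U_n$, $\max H_n<\min H_{n+1}$, and
\[
\mathrm{FS}\Bigl(\Bigl\langle a_n+\sum_{t\in H_n}y_{f(n),t}\Bigr\rangle_{n=1}^{\infty}\Bigr)\subseteq C
\]
for every $f\in\Phi$. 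Building each component of $\vec{x}$ as a finite sum of terms $a_n+\sum_{t\in H_n}y_{f(n),t}$, with the choice of $f$ and the summation sets dictated by the first entries form of $A$, yields a vector whose image under $A$ has each row equal to one such central sum; hence $A\vec{x}\in C^u$. The property $U_{n+1}+U_{n+1}\subseteq U_n$ together with $a_n\in U_n$ then lets us force $\vec{x}$ and $A\vec{x}$ inside any prescribed $U_m$ by starting the construction at a sufficiently large index.

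The principal obstacle is the rationality of the entries of $A$: the classical construction produces integer vectors, whereas here $\vec{x}$ must live in $S^v$, which need not contain $\mathbb{N}$ at all. The workaround is that ``$k$ copies of an element $s$'' in the classical proof can be replaced by ``a finite sum of $k$ distinct elements of an $\mathrm{FS}$-set inside $C$,'' which is precisely the kind of richness supplied by Theorem \ref{eCntrlThm}; the scaling $N$ is absorbed by adjusting the cardinalities of these sub-$\mathrm{FS}$-sums. Everything else --- verifying $Y\in\mathcal{Y}$ and checking that $\vec{x}$ and $A\vec{x}$ both lie in a prescribed $U\in\tau_e$ --- is routine given the \textbf{F} property.
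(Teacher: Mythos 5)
There is a genuine gap, and it sits exactly where you flag the ``principal obstacle.'' Your plan is to reduce the lemma to the claim that every image partition regular matrix over $\mathbb{N}$ has an image inside every set central near $e$, and to prove that claim by the Hindman--Leader first-entries induction driven by Theorem \ref{eCntrlThm}. But the lemma is asserted for an \emph{arbitrary} dense subsemigroup $S$ of $T$, while the first-entries induction (this is Lemma \ref{thm:central_img} in the paper, feeding into Theorem \ref{IprCntrlEq}) requires the additional hypothesis that $cS$ is an $\mathrm{IP}^{*}$-set for every first entry $c$. That hypothesis is what lets one pass from an element $a_{n}$ of an $\mathrm{FS}$-set contained in $cS\cap C$ to an element $z_{1,n}\in S$ with $a_{n}=z_{1,n}+\cdots+z_{1,n}$ ($c$ times); for a general dense $S$ the set $cS$ need not meet the relevant $\mathrm{FS}$-sets, and there may be no $x\in S$ with $cx\in C$ at all. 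Your proposed workaround --- replacing ``$c$ copies of an element $s$'' by ``a finite sum of $c$ distinct elements of an $\mathrm{FS}$-set inside $C$'' --- does not repair this: the entry of $A\vec{x}$ corresponding to a coefficient $c$ is literally $x_{j}$ added to itself $c$ times, hence an element of $cS$, and a sum of $c$ distinct $\mathrm{FS}$-elements is not of that form. The same problem recurs in your preprocessing step, since after clearing denominators by $N$ you would need to divide by $N$ inside $S$ to return to the original matrix. As written, your argument establishes the lemma only for semigroups $S$ with this extra divisibility property, not in the stated generality. (Your first reduction --- choosing the color class via a minimal idempotent of $e_{S}^{*}$, so that one $i_{0}$ works uniformly for all $U\in\tau_{e}$ --- is fine and is actually cleaner on that point than the paper's own write-up.)

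The paper avoids all of this with a much more elementary argument that never mentions central sets: by compactness choose $n$ so that image partition regularity over $\mathbb{N}$ already manifests on $r$-colorings of $\{1,2,\ldots,n\}$, pick a single element $z\in S$ in a small enough neighborhood of $e$ (so that all sums of at most $n$ copies of $z$ land in the prescribed $U$, using $U_{m+1}+U_{m+1}\subseteq U_{m}$), transfer the coloring of $S$ to a coloring of $\{1,\ldots,n\}$ via $t\mapsto$ the color of $tz$, and set $\vec{y}=z\vec{x}$ for the monochromatic integer vector $\vec{x}$. This works for every dense subsemigroup $S$ because it only ever adds the one element $z$ to itself, never divides. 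If you want to salvage your route, you must either add the hypothesis that $cS$ is $\mathrm{IP}^{*}$ for the relevant $c$ (as the paper does when it proves the genuinely stronger central-sets statement later), or fall back on a compactness-and-single-generator argument of the paper's kind.
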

\begin{proof}
Let $r\in\mathbb{N}$ and $S=\bigcup_{j=1}^{r}A_{j}$ . By a standard
compactness arguement pick $n\in\mathbb{N}$ such that whenever $\left\{ 1,\,2,\,3,\,\ldots,\, n\right\} =\bigcup_{j=1}^{r}D_{j}$,
there exist $\overrightarrow{x}\in\left\{ 1,\,2,\,3,\,\ldots,\, n\right\} ^{v}$
and $i\in\left\{ 1,\,2,\,3,\,\ldots,\, r\right\} $ such that $Ax\in\left(D_{i}\right)^{u}$.
Pick 
\[
z\in\left\{ S\bigcap U\colon\, U\in\mathcal{T}_{e}\right\} .
\]
For $i\in\left\{ 1,\,2,\,\ldots,n\right\} $, let 
\[
D_{i}=\left\{ t\in\left\{ 1,\,2,\,3,\,\ldots,\, n\right\} \colon\, z+z+z+z(t-times)\in A_{i}\bigcap U\right\} .
\]
 Thus, picking $i\in\left\{ 1,\,2,\,3,\,\ldots,\, r\right\} $ and
$\overrightarrow{x}\in\left\{ 1,\,2,\,3,\,\ldots,\, n\right\} ^{v}$
such that $A\overrightarrow{x}\in\left(D_{i}\right)^{u}$ and assuming
$\overrightarrow{y}=z\overrightarrow{x}$, we get $A\overrightarrow{y}\in A_{i}\bigcap U$.
\end{proof}

\begin{rem}
\label{Rmk2} This is to note that the converse of the above lemma
also holds good. This can be verified by the Corollary 1.6 of \cite{de2008universally}.
\end{rem}

\begin{defn}
First entries condition. Let $u,\, v\in\mathbb{N}$ and let $A$ be
a $u\times v$ matrix with integer entries. Then 
\begin{enumerate}
\item $A$ satisfies the first entries condition if and only if each row
of $A$ is not $0$ and whenever $i,\, j\in\left\{ 1,\,2,\,\ldots,\, u\right\} $
and 
\[
t=min\left\{ k\in\left\{ 1,\,2,\,\ldots,\, v\right\} \colon\, a_{i,\, k}\neq0\right\} =min\left\{ k\in\left\{ 1,\,2,\,\ldots,\, v\right\} \colon\, a_{j,\, k}\neq0\right\} 
\]
 one has $a_{i,\, t}=a_{j,\, t}>0$. 
\item A number $c$ is a first entry of $A$ if and only if for some $i\in\left\{ 1,\,2,\,\ldots,\, u\right\} $
and some $t\in\left\{ 1,\,2,\,\ldots,\, v\right\} $, $t=min\left\{ k\in\left\{ 1,\,2,\,\ldots,\, v\right\} \colon\, a_{i,\, k}\neq0\right\} $
and $c=a_{i,\, t}$ . 
\end{enumerate}
\end{defn}

\begin{defn}
Let $S$ be a semigroup and let $A\subseteq S$. Then $A$ is an $IP^{*}$set
if and only if for every $IP$ set $B\subseteq S$, $A\cap B\neq\emptyset$.\end{defn}
\begin{lem}
\label{thm:central_img} Let $S$ be a dense subsemigroup of a semitopological
semigroup $\left(T,\,+\right)$. Let $M$ be a $u\times v$ matrix,
$u,\, v\in\mathbb{N}$, with entries from $\omega$ satisfying the
first entries condition. Assume that for every first entry $c$ of
$M$, $cS$ is an $\mbox{IP}^{\star}$-set. Let $C\subseteq S$ be
a $e$-\textup{central} set for any $e\in E\left(T\right)$. Then
there exist sequences $\left\langle x_{1,\, t}\right\rangle _{t=1}^{\infty}$,
$\left\langle x_{2,\, t}\right\rangle _{t=1}^{\infty}$, $\cdots$,
$\left\langle x_{v,\, t}\right\rangle _{t=1}^{\infty}$ in $S$ such
that for each $i\in\left\{ 1,\,2,\,\ldots\,,\, v\right\} $, $lim_{t\to\infty}x_{i,\, t}=e$
and for each $F\in\mathcal{P}_{f}\left(\mathbb{N}\right)$, $M\vec{x}_{F}\in C^{u}$,
where \textup{$\vec{x}_{F}$} is

\[
\begin{array}{ccc}
\vec{x}_{F} & = & \left(\begin{array}{c}
\sum_{F}x_{1,\, n}\\
\sum_{F}x_{2,\, n}\\
\vdots\\
\sum_{F}x_{v,\, n}
\end{array}\right)\end{array}.
\]
\end{lem}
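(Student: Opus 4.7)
The plan is to induct on $v$, the number of columns of $M$, combining the Central Sets Theorem near $e$ (Theorem \ref{eCntrlThm}) with the $\mathrm{IP}^{\star}$ hypothesis on $cS$ for each first entry $c$ of $M$. Throughout, fix an idempotent $p \in K \cap e_S^{*}$ with $C \in p$, whose existence is the very definition of $C$ being central near $e$; note also that $U \cap S \in p$ for every $U \in \tau_e$, since $p \in e_S^{*}$.

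For the base case $v = 1$, the first entries condition forces every entry of $M$ to equal a common positive integer $c$. The characterization of idempotents in $e_S^{*}$ recalled earlier supplies a sequence $\langle w_n\rangle$ in $S$ with $w_n \to e$, $\sum w_n$ convergent near $e$, and $FS(\langle w_n\rangle) \subseteq C$. Since $cS$ is $\mathrm{IP}^{\star}$, it belongs to every idempotent of $\beta S_d$, so $cS \in p$, and a Hindman-style extraction inside $p$, at each step $k$ selecting the next term from $U_k \cap cS \cap C \in p$, produces a subsequence $\langle w_{n_k}\rangle$ with $w_{n_k} \in U_k$ and $FS(\langle w_{n_k}\rangle) \subseteq cS \cap C$. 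Writing $w_{n_k} = c\, x_{1,k}$ with $x_{1,k} \in S$ delivers the sequence required in this case; $x_{1,k} \to e$ is inherited from $c\, x_{1,k} \to e$ via the convergence properties built into $\mathcal{B}$ at the idempotent $e$.

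For the inductive step, partition the rows of $M$ into $R_0 = \{i : a_{i,1} = 0\}$ and $R_1 = \{i : a_{i,1} \neq 0\}$; the first entries condition forces $a_{i,1} = c$ for one common positive integer $c$ and every $i \in R_1$. The submatrix $M'$ formed by rows $R_0$ and columns $2,\dots,v$ still satisfies the first entries condition on $v-1$ columns, and its first entries are among those of $M$, so the induction hypothesis supplies sequences $\langle x_{i,t}\rangle$ in $S$ for $i = 2,\dots,v$, each tending to $e$, with $M'\vec{x}_F' \in C^{|R_0|}$ for every $F \in \mathcal{P}_f(\mathbb{N})$.

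What remains, and is the chief obstacle, is to produce $\langle x_{1,t}\rangle$ so that simultaneously for every $i \in R_1$ and every $F \in \mathcal{P}_f(\mathbb{N})$,
\[
c \sum_{n \in F} x_{1,n} \,+\, \sum_{k=2}^{v} a_{i,k} \sum_{n \in F} x_{k,n} \,\in\, C.
\]
I propose a recursive construction of $\langle x_{1,t}\rangle$. Setting $C^{\star} = \{x \in C : -x + C \in p\}$, the idempotency of $p$ forces $C^{\star} \in p$ and, crucially, $-x + C^{\star} \in p$ for every $x \in C^{\star}$. At stage $t$, having already chosen $x_{1,1},\dots,x_{1,t-1}$, only finitely many translates of $C^{\star}$ by sums of previously-fixed correction terms $\sum_{n\in F}\sum_{k=2}^{v}a_{i,k}x_{k,n}$ (for $i \in R_1$ and $F \subseteq \{1,\dots,t\}$ containing $t$) need simultaneous accommodation; their intersection with $cS$ and with $U_t \cap S$ remains in $p$, so $c\, x_{1,t}$ and hence $x_{1,t} \in S \cap U_t$ can be selected from it. The membership $x_{1,t} \in U_t$ enforces $x_{1,t} \to e$, while the $C^{\star}$-absorption identity propagates the admissibility of the chosen terms to all later stages, closing the recursion. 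The delicate bookkeeping here---verifying that the nested shifts of $C^{\star}$ by arbitrary partial sums of the recursively-built $c\, x_{1,n}$ still reside in $p$---is, in my view, the main technical point of the proof, handled by the standard $C^{\star}$ absorption together with the $\mathrm{IP}^{\star}$ stability of $cS$.
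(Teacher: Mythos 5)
Your base case is fine and matches the paper's (apply the characterization of idempotents in $e_S^{*}$ to the set $cS\cap C$, which lies in $p$ because $cS$ is $\mathrm{IP}^{\star}$, then divide by $c$). The inductive step, however, has a genuine gap at exactly the point you flag as "the main technical point." You keep the sequences $\langle x_{k,t}\rangle$, $k=2,\dots,v$, produced by the induction hypothesis fixed, and try to choose $x_{1,t}$ one term at a time so that $\sum_{n\in F}\bigl(c\,x_{1,n}+u_{i,n}\bigr)\in C$ for all $i\in R_1$ and all $F$, where $u_{i,n}=\sum_{k\ge 2}a_{i,k}x_{k,n}$ are the already-fixed correction terms. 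At stage $t$ this forces you to pick $c\,x_{1,t}$ from sets of the form $-u_{i,t}+\bigl(-b+C^{\star}\bigr)$ (and, for $F=\{t\}$, from $-u_{i,t}+C^{\star}$). The absorption property of $C^{\star}$ gives $-b+C^{\star}\in p$ only when $b\in C^{\star}$; it says nothing about translates by the arbitrary elements $u_{i,t}$. Indeed $-u_{i,t}+C^{\star}\in p$ would require $C^{\star}\in u_{i,t}+p$, and an idempotent $p$ is not invariant under translation by fixed elements of $S$, so the intersection you form at stage $t$ need not be in $p$ and the selection of $x_{1,t}$ can fail. The $\mathrm{IP}^{\star}$ hypothesis on $cS$ only puts $cS$ itself in $p$; it does not repair these translates.

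This obstruction is precisely what the Central Sets Theorem near $e$ (Theorem \ref{eCntrlThm}) exists to overcome, and it is how the paper proceeds: from the inductively chosen sequences one forms $y_{j,t}=\sum_{i=2}^{v+1}b_{j,i}\,x_{i-1,t}$ for the rows $j$ with first entry $c$, obtains $Y\in\mathcal{Y}$, and applies the theorem to the central-near-$e$ set $cS\cap C$ to get $a_n\in U_n$ and blocks $H_n$ with $\mathrm{FS}\bigl(\langle a_n+\sum_{t\in H_n}y_{j,t}\rangle\bigr)\subseteq cS\cap C$. The extra freedom to choose the blocks $H_n$ is essential, and it means the final sequences are \emph{not} the inductively obtained ones but their block sums $z_{i,n}=\sum_{t\in H_n}x_{i-1,t}$, together with $z_{1,n}=a_n/c$. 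Your argument never introduces the blocks $H_n$ and leaves the old sequences unbunched, so it cannot be completed as written; to fix it you should replace the recursive $C^{\star}$ selection of $x_{1,t}$ by an invocation of Theorem \ref{eCntrlThm} and redefine all $v+1$ output sequences accordingly.
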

\begin{proof}
Let $v=1$. Then by deleting repeated rows, we get $M=\left(c\right)$
for some $c\in\mathbb{N}$. Given $cS$ being $\mbox{IP}^{\star}$,
$cS\cap C$ is $e$-central set. So, there is some idempotent $p\in e_{S}^{*}$
for which $cS\cap C\in p$. By Theorem 3.2 of \cite{akbari2012semigroup},
there exist some sequence $\left(y_{n}\right)_{n=1}^{\infty}$ in
$S$ such that $\lim_{n\rightarrow\infty}y_{n}=e$, $\sum_{n=1}^{\infty}y_{n}$
converges $e$ and $FS\left(\left(y_{n}\right)_{n=1}^{\infty}\right)\subseteq cS\cap C$.
Since each $y_{n}\in cS\cap C$, we choose $y_{n}=cx_{1,n}$ for each
$n\in\mathbb{N}$ and $x_{1,n}\in S$. Hence, we get our required
$\left(x_{1,n}\right)_{n=1}^{\infty}$ such that $\sum_{n=1}^{\infty}x_{1,\, n}$
converges near $e$ for $v=1$.

Now, we assume that the result holds for $v$ and prove it for $v+1$.
Let $M$ be a $u\times\left(v+1\right)$ matrix satisfying the assumptions
of our result. By adding additional rows, if necessary, we may assume
that there is some $l\in\left\{ 1,\,2,\,\ldots\,,\, u-1\right\} $
and some $c\in\mathbb{N}$ such that for each  $j\in\left\{ 1,\,2,\,\ldots\,,\, u\right\} $,

\[
b_{j,\,1}=\begin{array}{cc}
0 & if\, j\leq l\\
c & if\, j>l
\end{array}
\]

Let $D$ be the $l\times v$ matrix defined by $d_{j,\, i}=b_{j,\, i+1}$
and so $D$ satisfies the first entries condition and all the first
entries of $M$ are the first entries of $D$. Hence, by induction,
choose sequences $\left(x_{1,\, t}\right)_{t=1}^{\infty}$, $\left(x_{2,\, t}\right)_{t=1}^{\infty}$,
$\cdots$, $\left(x_{v,\, t}\right)_{t=1}^{\infty}$ in $S$ such
that $\sum_{t=1}^{\infty}x_{i,\, t}$ converges near $e$ $\forall i\in\left\{ 1,\,2,\,\ldots\,,\, v\right\} $.
For each $j\in\left\{ l+1,\, l+2,\,\ldots\,,\, u\right\} $ and each
$t\in\mathbb{N}$, let $y_{j,\, t}=\sum_{i=2}^{v+1}b_{j,\, i}\cdotp x_{i-1,\, t}$.
Note that since each $\sum_{t=1}^{\infty}x_{i,\, t}$ converges near
$e$ $\forall i\in\left\{ 1,\,2,\,\ldots\,,\, v\right\} $, $\sum_{t=1}^{\infty}y_{j,\, t}$
also converges near $e$. For $j\in\mathbb{N}\setminus\left\{ l+1,\, l+2,\,\ldots\,,\, u\right\} $and
$t\in\mathbb{N}$, let $y_{i,\, t}=y_{u,\, t}$. Let $Y=\left(\left(y_{i,\, t}\right)_{t=1}^{\infty}\right)_{i=1}^{\infty}$
. Note that using the Definition 4.1 of \cite{akbari2012semigroup},
we get $Y\in\mathcal{Y}$. 

Now by applying Theorem 4.3 of \cite{akbari2012semigroup}, choose
sequences $\left(a_{n}\right)_{n=1}^{\infty}$ in $S$ and $\left(H_{n}\right)_{n=1}^{\infty}$
in $\mathcal{P}_{f}\left(\mathbb{N}\right)$ such that 
\begin{enumerate}
\item for each $n\in\mathbb{N}$, $a_{n}\in U_{n}$ and $maxH_{n}<minH_{n+1}$,
and
\item for each $f\in\phi$, $FS\left(\left\langle a_{n}+\sum_{t\in H_{n}}y_{f\left(n\right),\, t}{}_{n=1}^{\infty}\right\rangle \right)\subseteq cS\cap C$. 
\end{enumerate}
By considering the function $f$ to be $f\left(n\right)=n$ for every
$n\in\mathbb{N}$ , we can presume that for each $j\in\left\{ l+1,l+2,\ldots,u\right\} $
and each $F\in\mathcal{P}_{f}\left(\mathbb{N}\right)$ one has $\sum_{n\in F}\left(a_{n}+\sum_{t\in H_{n}}y_{j,\, t}\right)\in cS\cap C$.
For each $n\in\mathbb{N}$, let $z_{1,\, n}$ be the elements satisfying
the equation $a_{n}=z_{1,\, n}+z_{1,\, n}+\cdots z_{1,\, n}$ $\left(c-times\right)$.
For each $n\in\mathbb{N}$ and $i\in\left\{ 2,\,3,\,\ldots\,,\, v+1\right\} ,$
let $z_{i,\, n}=\sum_{t\in H_{n}}x_{i-1,\, t}$. We claim that the
sequences $\left(z_{1,\, t}\right)_{t=1}^{\infty}$, $\left(z_{2,\, t}\right)_{t=1}^{\infty}$,
$\cdots$, $\left(z_{v+1,\, t}\right)_{t=1}^{\infty}$ are as required.

For each $i\in\left\{ 1,\,2,\,\ldots,\, v+1\right\} $, $lim_{t\to\infty}z_{i,\, t}=e$
holds. Let $j\in\left\{ 1,\,2,\,\ldots,\, u\right\} $ and $F\in\mathcal{P}_{f}\left(\mathbb{N}\right)$.
We show that 
\[
\sum_{i=1}^{v+1}b_{j,\, i}\cdotp\sum_{n\in F}z_{i,\, n}\in B
\]

Let $G=\cup_{n\in F}H_{n}$. First, assume $j\in\left\{ 1,2,\ldots,l\right\} .$
Then, 

\[
\sum_{i=1}^{v+1}b_{j,\, i}\cdotp\sum_{n\in F}z_{i,\, n}=\sum_{i=1}^{v+1}b_{j,\, i}\cdotp\sum_{n\in F}\left(\sum_{t\in H_{n}}x_{i-1,\, t}\right)=\sum_{i=1}^{v}d_{j,\, i}\cdotp\sum_{t\in G}x_{i,\, t}\in C
\]
 by induction hypothesis.

Next, let $j\in\left\{ l+1,l+2,\ldots,u\right\} .$ Then

\begin{eqnarray*}
\sum_{i=1}^{v+1}b_{j,\, i}\cdotp\sum_{n\in F}z_{i,\, n} & = & c\cdotp\sum_{n\in F}z_{1,\, n}+\sum_{i=1}^{v+1}b_{j,\, i}\cdotp\sum_{n\in F}\left(\sum_{t\in H_{n}}x_{i-1,\, t}\right)\\
 & = & \sum_{n\in F}\left(a_{n}+\sum_{i=2}^{v+1}b_{j,\, i}\cdotp\sum_{t\in H_{n}}x_{i-1,\, t}\right)
\end{eqnarray*}

and so 
\[
\sum_{i=1}^{v+1}b_{j,\, i}\cdotp\sum_{n\in F}z_{i,\, n}=\sum_{n\in F}\left(a_{n}+\sum_{t\in H_{n}}y_{j,\, t}\right)
\]

\end{proof}

\begin{thm}
\label{IprCntrlEq} Let $M$ be $u\times v$ matrix with entries from
$\omega$. Then the following are equivalent.
\begin{enumerate}
\item $M$ is $IPR/S$.
\item $M$ is $IPR/S_{e}$, $e\in E\left(T\right)$.
\item For every central set $C$ near any $e\in E\left(T\right)$, there
exists $\overrightarrow{x}\in S^{v}$ such that $M\overrightarrow{x}\in C^{u}$.
\end{enumerate}
\end{thm}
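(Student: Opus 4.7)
The plan is to establish the three-way equivalence by proving (i) $(1) \Leftrightarrow (2)$, (ii) $(3) \Rightarrow (2)$, and (iii) $(1) \Rightarrow (3)$; these three together close all the necessary cycles.

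The equivalence $(1) \Leftrightarrow (2)$ is essentially free from what has already been proved. The Lemma immediately after the definition of $IPR/S_e$ shows that any matrix which is IPR over $\mathbb{N}$ is also $IPR/S_e$, and Remark \ref{Rmk2} records that the converse holds via Corollary~1.6 of \cite{de2008universally}. Identifying $IPR/S$ with the classical notion of image partition regularity, this yields $(1) \Leftrightarrow (2)$ with essentially no further work.

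For $(3) \Rightarrow (2)$ I would use minimal idempotents. Given a finite partition $S = \bigcup_{i=1}^{r} A_i$, fix any minimal idempotent $p$ in $K \subseteq e_S^*$. Since $p$ is an ultrafilter and the $A_i$ partition $S$, exactly one cell $A_{i_0}$ belongs to $p$, and this $A_{i_0}$ is central near $e$ by definition. To secure the ``for every $U \in \tau_e$'' clause in the definition of $IPR/S_e$, I would note that $p \in e_S^*$ forces $U \cap S \in p$ for every neighborhood $U$ of $e$: because $e$ lies in the closure of every member of $p$ while the Hausdorff property prevents $S \setminus U$ from accumulating at $e$, we get $S \setminus U \notin p$. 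Hence $A_{i_0} \cap U \in p$ is itself central near $e$, and (3) applied to this set produces the required $\vec{x} \in S^v$ with $M\vec{x} \in (A_{i_0} \cap U)^u$. The convergence of the entries of $\vec{x}$ to $e$ is arranged by shrinking $U$ along a base with the $\mathbf{F}$-property at $e$.

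The main technical step, and where I expect the bulk of the difficulty to sit, is $(1) \Rightarrow (3)$. The plan is to mimic the classical Hindman--Strauss strategy: given an IPR matrix $M$, reduce it (by row operations and augmentation) to a matrix $M'$ satisfying the first entries condition, in such a way that any image of $M'$ lying in a set $C$ yields an image of $M$ lying in $C$. Then Lemma \ref{thm:central_img} applied to $M'$ and a given central set $C$ near $e$ delivers sequences whose finite sums produce the required $\vec{x}$ with $M\vec{x} \in C^u$. The delicate points will be carrying out the first-entries reduction in the semitopological near-$e$ setting and verifying the $IP^*$ hypothesis on $cS$ for each first entry $c$ of $M'$; this is where most of the technical effort is expected to concentrate.
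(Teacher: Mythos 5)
Your proposal follows essentially the same route as the paper: the easy implications are dispatched via the earlier lemma and Remark \ref{Rmk2} identifying $IPR/S_e$ with image partition regularity over $\mathbb{N}$, and the substantive direction is handled by factoring $M$ through a first entries matrix (Theorem 4.1 of \cite{hindman2003image}, $B=MG$) and then invoking Lemma \ref{thm:central_img} on the central set near $e$. You are in fact slightly more careful than the paper, both in closing the full logical cycle (the paper only records (b)$\Rightarrow$(a), (c)$\Rightarrow$(b), (b)$\Rightarrow$(c)) and in flagging the $IP^{*}$ hypothesis on $cS$ that Lemma \ref{thm:central_img} requires but the theorem statement omits.
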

\begin{proof}
(b)$\implies$(a) is obvious. (c)$\implies$(b) is obvious too. (b)$\implies$(c)
Given an idempotent $e\in E(T)$, let $C$ be a central set near that
$e$. Let $M$ be $IPR/S_{e}$. By \ref{Rmk2}, we can say $M$ is
$IPR/\mathbb{N}$. Then by a similar result of \textbf{\cite[Theorem 4.1]{hindman2003image}},\textbf{
}there exist $v\times m$ matrix $G$ some $m\in\left\{ 1,\,2,\,\ldots\,,\, u\right\} $
such that $B=MG$ where $B$ is a $u\times m$ first entry matrix.
Then by the above lemma \ref{thm:central_img}, there exist some $\overrightarrow{y}\in S^{m}$
such that $B\vec{y}\in C^{u}$ and hence we get some $\vec{x}\in S^{v}$
such that $M\vec{x}\in C^{u}$.
\end{proof}

\section{Infinite Matrices}

Combinatorial properties of infinite matrices, on the other hand,
is substantially different from that of finite matrices. 
\begin{defn}
Let $S$ be a dense subsemigroup of a semitopological semigroup $\left(T,\,+\right)$.
Let $\mathcal{B}=\left\{ U_{n}\right\} _{n=1}^{\infty}$ has the \textbf{F
}property at the point $e\in E\left(T\right)$. Let $M$ be a $\omega\times\omega$
with entries from $\mathbb{Q}$. Then $M$ is image partition regular
over $S$ near $e$ (abbreviated $IPR/S_{e}$) if and only if, whenever
$S=\bigcup_{i=1}^{r}A_{i}$ for some $r\in\mathbb{N}$, there is some
$i\in\left\{ 1,\,2,\,\ldots,\, r\right\} $ such that for every $U\in\tau_{e}$
there exists $\vec{x}\in S^{v}$ with $\lim_{n\to\infty}x_{n}=e$
for which $M\vec{x}\in A_{i}\cap U$.\end{defn}
\begin{lem}
Let $A$ be finite image partition regular matrix over $\mathbb{N}$
and $B$ be infinite image partition regular matrix near an idempotent
$e$ over any dense subsemigroup $S$ of semitopological semigroup
$T$ . Then 
\[
\left(\begin{array}{cc}
A & O\\
O & B
\end{array}\right)\,
\]
is image partition regular near the idempotent $e$.\end{lem}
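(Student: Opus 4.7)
The plan is to locate a single color class $A_{i_{0}}$ in the given finite partition of $S$ that simultaneously accommodates images of both matrices, and then to paste the two solutions together into a single block vector. Given $S=\bigcup_{i=1}^{r}A_{i}$ and $U\in\tau_{e}$, I would first apply the infinite IPR-near-$e$ hypothesis on $B$ to this partition, obtaining an index $i_{0}\in\{1,\ldots,r\}$ such that for every $V\in\tau_{e}$ there is $\vec{y}_{V}\in S^{\omega}$ with $\lim_{n}(y_{V})_{n}=e$ and $B\vec{y}_{V}\in A_{i_{0}}\cap V$.

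The next task is to produce $\vec{x}\in S^{v}$ with $A\vec{x}\in (A_{i_{0}}\cap U)^{u}$; gluing this to $\vec{y}_{V}$ for $V\subseteq U$ chosen sufficiently small then yields the required monochromatic witness for the block diagonal matrix. Since $A$ is classically image partition regular over $\mathbb{N}$, Lemma 2.1 gives that $A$ is $IPR/S_{e}$, and Theorem~\ref{IprCntrlEq} promotes this to the central set property, namely that $A$ has an image entirely inside any central set near $e$. So it suffices to verify that $A_{i_{0}}\cap U$ contains a central set near $e$; equivalently, that $A_{i_{0}}\cap U\cap S$ is a member of some minimal idempotent $p\in K(e_{S}^{*})$. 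Note that for any such $p$ one automatically has $U\cap S\in p$, so the issue reduces to placing $A_{i_{0}}$ itself in a minimal idempotent.

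The main obstacle is precisely this centrality alignment: the color class selected by $B$'s IPR near $e$ need not a priori coincide with a cell belonging to a preselected minimal idempotent. The remedy I propose is to fix a minimal idempotent $p\in K(e_{S}^{*})$ in advance, let $A_{i_{0}}\in p$ be the unique cell it sees, and then apply $B$'s IPR-near-$e$ hypothesis to the refined $2$-partition $\{A_{i_{0}},\,S\setminus A_{i_{0}}\}$; the critical subclaim is that $B$'s selected cell under this refinement must be $A_{i_{0}}$ itself, an infinite analog of the equivalence in Theorem~\ref{IprCntrlEq} that would need to be established via an ultrafilter/compactness argument, or else built into the hypotheses on $B$. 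Alternatively, one can emulate the scaling trick from the proof of Lemma~2.1 by using an entry $z$ of some $B\vec{y}_{V}$ (which lies in $A_{i_{0}}\cap V$) as a seed, partitioning $\{1,\ldots,n\}$ by $D_{j}=\{t:tz\in A_{j}\}$, and invoking the classical IPR of $A$ over $\mathbb{N}$; the remaining burden is to force the monochromatic image into $D_{i_{0}}$, which again hinges on the centrality of $A_{i_{0}}$. Once this alignment is secured, both blocks deliver images in $A_{i_{0}}\cap U$ and the block diagonal conclusion follows immediately.
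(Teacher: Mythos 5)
There is a genuine gap, and you have in fact put your finger on it yourself: the ``centrality alignment.'' Your proposed remedy is to fix a minimal idempotent $p\in K(e_{S}^{*})$ with $A_{i_{0}}\in p$, apply $B$'s image partition regularity near $e$ to the two-cell partition $\{A_{i_{0}},\,S\setminus A_{i_{0}}\}$, and claim that $B$ must select the cell $A_{i_{0}}$. That subclaim is false in general: for infinite matrices, image partition regularity near $e$ does \emph{not} imply that an image can be found inside every central set near $e$ --- this is exactly why the paper introduces the separate, strictly stronger notion of \emph{centrally} image partition regular matrices in Section 4. Milliken--Taylor matrices are the standard counterexample: they are image partition regular, yet there exist central sets containing no Milliken--Taylor image, so taking $A_{i_{0}}$ to be such a central set forces $B$'s witness into $S\setminus A_{i_{0}}$. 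Your alternative (the scaling trick with seed $z$ and cells $D_{j}=\{t: tz\in A_{j}\}$) fails for the same reason as stated, because you still try to force the finite matrix's image into the \emph{predetermined} cell $D_{i_{0}}$, and nothing guarantees the compactness argument lands there.

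The paper's proof sidesteps the alignment problem by refusing to predetermine the color. Given the $r$-coloring $\varphi$ of $S$ and the compactness bound $n$ for $A$, it introduces the product coloring $\psi$ with $r^{n}$ colors, where $\psi(x)=\psi(y)$ if and only if $\varphi(tx)=\varphi(ty)$ for all $t\in\{1,\ldots,n\}$, and applies $B$'s hypothesis to the $\psi$-partition. Taking a seed entry $a$ of the resulting $B\vec{y}$, one colors $\{1,\ldots,n\}$ by $\gamma(t)=\varphi(ta)$, finds $\vec{x}$ with $A\vec{x}$ $\gamma$-monochromatic in some color $j$ (whatever $j$ turns out to be), and uses the block vector built from $a\vec{x}$ and $i\vec{y}$ for an entry $i$ of $A\vec{x}$: the rows through $A$ give $\varphi(a(\vec{s}\cdot\vec{x}))=\gamma(\vec{s}\cdot\vec{x})=j$, and the rows through $B$ give $\varphi(i(\vec{s}\cdot\vec{y}))=\varphi(ia)=\gamma(i)=j$ because every entry of $B\vec{y}$ is $\psi$-equivalent to $a$. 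The product coloring is the missing idea; without it, or without strengthening the hypothesis on $B$ to central image partition regularity near $e$, your argument cannot be completed.
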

\begin{proof}
Let $A$ be a $u\times v$ matrix. Let $r\in\mbox{\ensuremath{\mathbb{N}}}$
and let $\mathbb{N}$ be $r$-colored. By compactness arguement, we
can always assume that whenever the set $\left\{ 1,\,2,\,3,\,\ldots\,,\, n\right\} $
is partitioned into $r$-cells, there exists $\overrightarrow{x}\in\mathbb{N}^{v}$
such that the entries of $A\overrightarrow{x}$ are monochromatic.
Let $S=\bigcup_{j=1}^{r}A_{j}$ be $r$-colored by $\varphi$.

Next, let $S$ be partitioned into $r^{n}$colors via $\psi$, where
$\psi\left(x\right)=\psi\left(y\right)$ if and only if for all $t\in\left\{ 1,\,2,\,3,\,\ldots\,,\, n\right\} $,
$\varphi\left(tx\right)=\varphi\left(ty\right)$. Pick $\overrightarrow{y}\in S^{\omega}$
such that the entries of $B\overrightarrow{y}$ are monochromatic
and are contained in some open subset $U$, $U\in\mathcal{T}_{e}$,
with respect to $\psi$. Pick an entry $a\in\left\{ S\bigcap U\colon\, U\in\mathcal{T}_{e}\right\} $
of $B\overrightarrow{y}$. Define $\gamma\colon\,\left\{ 1,\,2,\,3,\,\ldots,\, n\right\} \to\left\{ 1,\,2,\,3,\,\ldots,\, r\right\} $
by $\gamma\left(i\right)=\varphi\left(ia\right)$. For $i\in\left\{ 1,\,2,\,\ldots,n\right\} $,
let us set $D_{i}=\left\{ t\in\left\{ 1,\,2,\,3,\,\ldots,\, n\right\} \colon\, a+a+\ldots+a(t-\mbox{times})\in A_{i}\bigcap U\right\} $.
Chosing $n\in\mathbb{N}$ large enough so that whenever we partition
the set $\left\{ 1,\,2,\,3,\,\ldots,\, n\right\} $ into $r$ cells
we can pick $i\in\left\{ 1,\,2,\,3,\,\ldots,\, r\right\} $ and $\overrightarrow{x}\in\left\{ 1,\,2,\,3,\,\ldots,\, n\right\} ^{v}$
such that $A\overrightarrow{x}\in\left(D_{i}\right)^{u}$ and assuming
$\overrightarrow{u}=\overrightarrow{x}a$, we get $A\overrightarrow{u}\in A_{i}\bigcap U$.
Choose an entry $i$ of $A\overrightarrow{x}$ and let $j=\gamma\left(i\right)$.

Let $\vec{z}=\left(\begin{array}{c}
a\vec{x}\\
i\vec{y}
\end{array}\right)$. We claim that for any row $\vec{w}$ of $\left(\begin{array}{cc}
A & O\\
O & B
\end{array}\right)$, $\varphi(\vec{w}\cdot\vec{z})=j$. First assume that $\vec{w}$
is a row of $\left(\begin{array}{cc}
M & O\end{array}\right)$, so that $\vec{w}=\left\{ \vec{s}\right\} ^{\frown}\vec{0}$, where
$\vec{s}$ is a row of $M$. Then $\vec{w}\cdot\vec{z}=\vec{s}\cdot(a\vec{x})=a(\vec{s}\cdot\vec{x})$.
Therefore $\varphi(\vec{w}\cdot\vec{z})=\varphi(a(\vec{s}\cdot\vec{x}))=\gamma(\vec{s}\cdot\vec{x})=j$.

Next assume that $\vec{w}$ is a row of $\left(\begin{array}{cc}
O & N\end{array}\right)$, so that $\vec{w}=\vec{0}^{\frown}\vec{s}$ where $\vec{s}$ is a
row of $N$. Then $\vec{w}\cdot\vec{z}=i(\vec{s}\cdot\vec{y})$. Now
$\psi(\vec{s}\cdot\vec{y})=\psi(a)$. So $\varphi(i(\vec{s}\cdot\vec{y}))=\varphi(ia)=\gamma(i)=j$.
\end{proof}

\subsection{Insertion Matrices}

Here, we shall prove a class of infinite matrices, called \emph{Insertion
matrix}, which are image partition regular near an idempotent over
$S$ , where $S$ is a semitopological semigroup. 
\begin{defn}
Let $\gamma,\,\delta\in\omega\cup\left\{ \omega\right\} $ and let
$C$ be a $\gamma\times\delta$ matrix with finitely many nonzero
entries in each row. For each $t<\delta$ , let $B_{t}$ be a finite
matrix of dimension $u_{t}\times v_{t}$ .

Let $R=\left\{ \left(i,\, j\right)\colon\, i<\gamma\,\mbox{and}\, j\in\times_{t<\delta}\left\{ 0,\,1,\,\ldots\,,\, u_{t}-1\right\} \right\} $.
Given $t<\delta$ and $k\in\left\{ 0,\,1,\,\ldots\,,\, u_{t}-1\right\} $,
denote by $\vec{b}_{k}^{\left(t\right)}$ the $k^{th}$ row of $B_{t}$.
Then $D$ is an insertion matrix of $\left\langle B_{t}\right\rangle _{t<\delta}$
into $C$ if and only if the rows of $D$ are all of the form $c_{i,\,0}\cdot\vec{b}_{j\left(0\right)}^{\left(0\right)}\frown c_{i,\,1}\cdot\vec{b}_{j\left(1\right)}^{\left(1\right)}\frown\ldots$\textbf{
}where $\left(i,\, j\right)\in R$.\end{defn}
\begin{example}
If 
\[
C=\left(\begin{array}{cc}
1 & 0\\
2 & 1
\end{array}\right),
\]
 
\[
B_{0}=\left(\begin{array}{cc}
1 & 1\\
5 & 7
\end{array}\right)
\]

and
\[
B_{1}=\left(\begin{array}{cc}
0 & 1\\
3 & 3
\end{array}\right),
\]
 then the insertion matrix is
\[
\left(\begin{array}{cccc}
1 & 1 & 0 & 0\\
1 & 1 & 0 & 0\\
5 & 7 & 0 & 0\\
5 & 7 & 0 & 0\\
2 & 2 & 0 & 1\\
2 & 2 & 3 & 3\\
10 & 14 & 0 & 1\\
10 & 14 & 3 & 3
\end{array}\right)
\]

that is,\textbf{ }
\[
\left(\begin{array}{cccc}
1 & 1 & 0 & 0\\
5 & 7 & 0 & 0\\
2 & 2 & 0 & 1\\
2 & 2 & 3 & 3\\
10 & 14 & 0 & 1\\
10 & 14 & 3 & 3
\end{array}\right)
\]
 is an insertion matrix of $\left\langle B_{t}\right\rangle _{t<2}$
into $C$.
\end{example}
We shall now turn our attention to the Milliken-Taylor matrices with
entries from $\omega$ which are one of the main sources of infinite
image partition regular matrices.
\begin{defn}
Let $\overrightarrow{a}=\left\langle a_{1},\, a_{2},\ldots\,,\, a_{k}\right\rangle $
be a finite sequence in $\mathbb{N}$, and let $\vec{x}=\left\langle x_{n}\right\rangle _{n=1}^{\infty}$
be a sequence in $S$. Then the \emph{Milliken-Taylor system} determined
by $\overrightarrow{a}$ and $\vec{x}$, abbreviated as $\mbox{MT}\left(\overrightarrow{a},\,\vec{x}\right)$
is as follows: 
\[
\left\{ \sum_{t=1}^{k}a_{t}\cdot\sum_{n\in F_{t}}x_{n}:\, F_{t}\in\mathcal{P}_{f}\left(\omega\right),\,\mbox{and\,\ if}\, t<k,\,\mbox{then\,\ max}F_{t}<\mbox{min}F_{t+1}\right\} .
\]
 
\end{defn}
Let $\vec{a}$ has adjacent repeated entries and let $\vec{c}$ is
obtained from $\vec{a}$ by deleting such repetitions, then for any
infinite sequence $\vec{x}$, one has $\mbox{MT}\left(\vec{a},\,\vec{x}\right)\subseteq\mbox{MT}\left(\vec{c},\,\vec{x}\right)$,
so it suffices to consider sequences $\vec{c}$ without adjacent repeated
entries. 
\begin{defn}
Let $\vec{a}$ be a finite or infinite sequence in $\omega$ with
only finitely many nonzero entries. Then $c\left(\vec{a}\right)$
is the sequence obtained from $\vec{a}$ by deleting all zeroes and
then deleting all adjacent repeated entries. The sequence $c\left(\vec{a}\right)$
is the compressed form of $\vec{a}$. If $\vec{a}=c\left(\vec{a}\right)$,
then $\vec{a}$ is a compressed sequence. 
\end{defn}
For example, if $\vec{a}=\left\langle 0,\,1,\,0,\,0,\,1,\,2,\,0,\,2,\,0,\,0,\,\ldots\right\rangle $,
then $c\left(\vec{a}\right)=\left\langle 1,\,2\right\rangle $.
\begin{defn}
Let $\overrightarrow{a}$ be a compressed sequence in $\mathbb{N}.$
A Milliken-Taylor matrix determined by $\overrightarrow{a}$ is $\omega\times\omega$
matrix $A$ such that the rows of $A$ are all possible rows with
finitely many nonzero entries and its compressed form is equal to
$\overrightarrow{a}$.
\end{defn}
Note that the set of entries of $A\vec{x}$ is precisely $\mbox{MT}\left(\vec{a},\,\vec{x}\right)$,
where $A$ is a Milliken-Taylor matrix whose each row have compressed
form $\vec{a}$ and $\vec{x}$ is an infinite sequence in $S$. 
\begin{thm}
Let $(T,+)$ be a commutative semitopological semigroup and $S$ be
a dense subsemigroup of $T$. Let $\mathcal{B}$ have the $\mathbf{F}$
property at the point $e\in E(T)$. Let $\vec{a}=\langle a_{0},a_{1},...,a_{l}\rangle$
be a compressed sequence in $\mathbb{N}$ with $l>0$ , let $C$ be
a Milliken-Taylor matrix for $\vec{a}$ , and for each $t<\omega$
, let $B_{t}$ be a $u_{t}\times v_{t}$ finite matrix with entries
from $\omega$ which is image partition regular matrix over $\mathbb{N}$.
Then any insertion matrix of $\langle B_{t}\rangle_{t<\omega}$ into
$C$ is image partition regular near idempotent ie $IPR/S_{e}$.\end{thm}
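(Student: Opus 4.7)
The plan is to select a central set $C^{*}$ near $e$ inside one cell of the given partition and then use the Central Sets Theorem near $e$ (Theorem~\ref{eCntrlThm}) to construct the desired $\vec{x}$. Specifically, given a partition $S=\bigcup_{i=1}^{r}A_{i}$, pick $i$ so that $A_{i}$ contains a central set $C^{*}$ near $e$ (possible because $K\subseteq e_{S}^{*}$ contains idempotents, each member of which lies in some $A_{i}$). It then suffices to produce $\vec{x}\in S^{\omega}$ with $\lim_{n\to\infty}x_{n}=e$ and $D\vec{x}\in(C^{*})^{\omega}$, where $D$ denotes the given insertion matrix of $\langle B_{t}\rangle_{t<\omega}$ into $C$.

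Write $\vec{x}=\vec{x}^{(0)}\frown\vec{x}^{(1)}\frown\cdots$ with $\vec{x}^{(t)}\in S^{v_{t}}$, and let $w^{(t)}_{m}$ be the $m$-th entry of $B_{t}\vec{x}^{(t)}$. Then a row of $D$ indexed by $(i,j)$ dotted with $\vec{x}$ collapses to $\sum_{t<\omega}c_{i,t}\,w^{(t)}_{j(t)}=\sum_{k=0}^{l}a_{k}\sum_{t\in G_{k}^{(i)}}w^{(t)}_{j(t)}$, where $G_{0}^{(i)}<\cdots<G_{l}^{(i)}$ partition the finite set of nonzero-column positions of row $i$ of $C$ by coefficient value (such a partition exists because $\vec{a}=\langle a_{0},\ldots,a_{l}\rangle$ is the compressed form of the row, and $C$ has finitely many nonzero entries per row). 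Thus the task reduces to selecting the blocks $\vec{x}^{(t)}$ so that every such $\vec{a}$-weighted sum, taken over every choice of internal rows $j(\cdot)$, lies in $C^{*}$.

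To achieve this, I would exploit that each $B_{t}$ is IPR$/S_{e}$ (by the first lemma of Section~2, since it is IPR$/\mathbb{N}$). For each pair $(t,m)$ with $t<\omega$ and $m<u_{t}$, build a sequence $\langle y^{(t)}_{m,s}\rangle_{s=1}^{\infty}$ in $S$ converging to $e$ whose finite sums are realizable as the $m$-th entry of $B_{t}\vec{z}$ for suitable $\vec{z}\in S^{v_{t}}$, and enumerate these pairs as a single member $Y\in\mathcal{Y}$. Applying Theorem~\ref{eCntrlThm} to $Y$ and $C^{*}$ yields $\langle a_{n}\rangle_{n=1}^{\infty}$ in $S$ with $a_{n}\in U_{n}$ and disjoint finite blocks $\langle H_{n}\rangle_{n=1}^{\infty}$ in $\mathcal{P}_{f}(\mathbb{N})$ with $\max H_{n}<\min H_{n+1}$, such that for every $f\in\Phi$ one has $\mbox{FS}\langle a_{n}+\sum_{s\in H_{n}}y_{f(n),s}\rangle_{n=1}^{\infty}\subseteq C^{*}$. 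Identifying the blocks $H_{n}$ with the MT-blocks $G_{k}^{(i)}$, and using the freedom over $f\in\Phi$ to absorb the choice of internal $B_{t}$-rows $j(\cdot)$, produces blocks $\vec{x}^{(t)}$ whose concatenation is the required $\vec{x}$.

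The main obstacle is performing this bookkeeping uniformly: a single application of the Central Sets Theorem must serve simultaneously every row pattern $(i,j)$ of $D$ and every matrix $B_{t}$, so the sequence $Y\in\mathcal{Y}$ has to encode an entire infinite family of IPR witnesses (in contrast to the finite-matrix case treated in Lemma~\ref{thm:central_img}, where a single vector sufficed). The assumed commutativity of $T$ is crucial for the rearrangement $\sum_{t}c_{i,t}w^{(t)}_{j(t)}=\sum_{k}a_{k}\sum_{t\in G^{(i)}_{k}}w^{(t)}_{j(t)}$, and the hypothesis that each row of $C$ has only finitely many nonzero entries is what keeps each individual row of $D\vec{x}$ expressible as a finite combination of $B_{t}$-witnesses, making the reduction feasible.
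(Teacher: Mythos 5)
Your reduction target is wrong, and this is not a repairable bookkeeping issue. You propose to pick a cell $A_{i}$ containing a central set $C^{*}$ near $e$ and then to force all entries of $D\vec{x}$ into $C^{*}$ via Theorem \ref{eCntrlThm}. But for a compressed sequence $\vec{a}=\langle a_{0},\ldots,a_{l}\rangle$ with $l>0$ the entries of the insertion matrix image are weighted block sums $\sum_{k=0}^{l}a_{k}\sum_{t\in F_{k}}y_{t}$ with $F_{0}<\cdots<F_{l}$ and distinct adjacent weights, whereas the Central Sets Theorem near $e$ only delivers unweighted finite sums $\sum_{n\in F}\bigl(a_{n}+\sum_{t\in H_{n}}y_{f(n),t}\bigr)$. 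No choice of $Y\in\mathcal{Y}$ and $f\in\Phi$ converts the latter into the former: the additive terms $a_{n}$ cannot be suppressed, and the weight attached to a block must vary across the ordered blocks inside a single sum, while $f$ is fixed before the finite set $F$ indexing the sum is chosen. Indeed, Milliken--Taylor matrices with nonconstant compressed form are the standard examples of image partition regular matrices that are \emph{not} centrally image partition regular, so the intermediate claim ``there exists $\vec{x}$ with $D\vec{x}\in(C^{*})^{\omega}$ for a central set $C^{*}$'' is false in general. The step you defer as ``the main obstacle'' is precisely where the argument breaks, not a uniformity technicality.

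The paper instead works with the nonidempotent ultrafilter $q=a_{0}\cdot p+a_{1}\cdot p+\cdots+a_{l}\cdot p$, where $p$ is a minimal idempotent of $e_{S}^{*}$, chooses the cell $A$ of the partition with $A\in q$, and runs an induction producing sets $D_{n}\in p$ and finite blocks $x_{\alpha_{n}},\ldots,x_{\alpha_{n+1}-1}$ with the entries of $B_{n}$ applied to that block lying in $D_{n}^{*}$; this is where the finite image partition regularity of the $B_{t}$ enters, via Theorem \ref{IprCntrlEq} applied to the sets $D_{n}^{*}$, which are central near $e$ because they belong to the minimal idempotent $p$. The hypotheses (I)--(VI) of that induction then propagate membership of the weighted partial sums through the successive summands of $q$, which is the mechanism that actually handles the distinct coefficients $a_{0},\ldots,a_{l}$. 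Your observations about collapsing a row of the insertion matrix to $\sum_{k}a_{k}\sum_{t\in G_{k}^{(i)}}w_{j(t)}^{(t)}$ and about where commutativity and row-finiteness are used are correct and reappear at the end of the paper's proof, but to salvage your outline you must replace ``central set $C^{*}$'' by ``member of $q$'' and replace the appeal to the Central Sets Theorem by the inductive star-set construction.
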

\begin{proof}
Pick by Lemma 2.3 of \cite{akbari2012semigroup}some minimal idempotent
$p$ of $e_{S}^{*}$. Let $q=a_{0}\cdot p+a_{1}\cdot p+\cdots+a_{l}\cdot p$.
Then by Lemma 2.5 of \cite{de2008universally}, $q\in e_{S}^{*}$.
Let $\mathcal{G}$ be a finite partition of $S$ and pick $A\in\mathcal{G}$
such that $A\in q$. 

Now $\{x\in S:-x+A\in a_{1}.p+.....+a_{l}.p\}\in a_{0}.p$ so that
$D_{0}=\{x\in S:-a_{0}.x+A\in a_{1}.p+.....+a_{l}.p\}\in p$. Then
$D_{0}^{\star}\in p$ (as used in Theorem 2.6 of \cite{de2008universally}).\textbf{ }

Let $\alpha_{0}=0$ and inductively let $\alpha_{n+1}=\alpha_{n}+v_{n}$.

So pick by theorem \ref{IprCntrlEq} , $x_{0},x_{1},.....,x_{\alpha_{1}-1}\in S$
such that 
\[
B_{0}\left(\begin{array}{c}
x_{0}\\
x_{1}\\
\vdots\\
x_{\alpha_{1}-1}
\end{array}\right)\in\left(D_{0}^{*}\right)^{u_{0}}
\]
Let $H_{0}$ be the set of entries of 
\[
B_{0}\left(\begin{array}{c}
x_{0}\\
x_{1}\\
\vdots\\
x_{\alpha_{1}-1}
\end{array}\right)
\]

Inductively, let $n\in\mathbb{N}$ and assume that we have chosen
$\left\langle x_{t}\right\rangle _{t=0}^{\alpha_{n}-1}$ in $S$ ,
$\left\langle D_{k}\right\rangle _{k=0}^{n-1}$ in $p$ , and $\left\langle H_{k}\right\rangle _{k=0}^{n-1}$
in the set $\mathcal{P}_{f}(\mathbb{N})$ of finite nonempty subsets
of $\mathbb{N}$ such that for $r\in\{0,1,...,n-1\}$,

$\textbf{(I)}$ $H_{r}$ is the set of entries of 
\[
B_{r}\left(\begin{array}{c}
x_{\alpha_{r}}\\
x_{\alpha_{r}+1}\\
\vdots\\
x_{\alpha_{r+1}-1}
\end{array}\right),
\]

$\textbf{(II)}$ if $\emptyset\neq F\subseteq\{0,1,....,r\}$, $k=minF$,
and for each $t\in F$, $y_{t}\in H_{t}$, then $\sum_{t\in F}y_{t}\in D_{k}^{\star}$ 

$\textbf{(III)}$ if $r<n-1$, then $D_{r+1}\subseteq D_{r}$;

$\textbf{(IV)}$ if $m\in\{0,1,....,l-1\}$ and $F_{0},F_{1},....,F_{m}$
are all nonempty subsets of $\{0,1,...,r\}$, for each $i\in\{0,1,...,m-1\}$,
$maxF_{i}<minF_{i+1}$, and for each $t\in\bigcup_{i=0}^{m}F_{i}$,
$y_{t}\in H_{t}$, then $-\sum_{t=0}^{m}a_{i}.\sum_{t\in F_{i}}y_{t}+A\in a_{m+1}.p+a_{m+2}.p+...+a_{l}.p$;

$\textbf{(V)}$ if $r<n-1$ $F_{0},F_{1},....,F_{l-1}$ are nonempty
subsets of the set $\{0,1,...,r\}$, for each $i\in\{0,1,...,m-1\}$,
$maxF_{i}<minF_{i+1}$, and for each $t\in\bigcup_{i=0}^{m}F_{i}$,
$y_{t}\in H_{t}$, then $D_{r+1}\subseteq a_{l}^{-1}(-\sum_{t=0}^{l-1}a_{i}.\sum_{t\in F_{i}}y_{t}+A)$;
and

$\textbf{(VI)}$ if $r<n-1$, $m\in\{0,1,....,l-2\}$, $F_{0},F_{1},....,F_{m}$
are nonempty subsets of $\{0,1,...,r\}$, for each $i\in\{0,1,...,m-1\}$,
$maxF_{i}<minF_{i+1}$, and for each $t\in\bigcup_{i=0}^{m}F_{i}$,
$y_{t}\in H_{t}$, then $D_{r+1}\subseteq\{x\in\mathbb{R}:-a_{m+1}.x+(-\sum_{t=0}^{m}a_{i}.\sum_{t\in F_{i}}y_{t}+A)\in a_{m+2}.p+a_{m+3}.p+...+a_{l}.p\}$.

At $n=1$ , hypotheses $\textbf{(I)}$, $\textbf{(II)}$, and $\textbf{(IV)}$
hold directly while $\textbf{(III)}$, $\textbf{(V)}$, and $\textbf{(VI)}$
are vacuous.

For $m\in\{0,1,...,l-1\}$, let $G_{m}=\{\sum_{t=0}^{m}a_{i}.\sum_{t\in F_{i}}y_{t}$
: $F_{0},F_{1},....,F_{m}$ are nonempty subsets of $\{0,1,...,n-1\}$,
for each $i\in\{0,1,...,m-1\}$, $maxF_{i}<minF_{i+1}$, and for each
$t\in\bigcup_{i=0}^{m}F_{i}$, $y_{t}\in H_{t}\}$.

For $k\in\{0,1,...,n-1\}$, let 

$E_{k}=\{\sum_{t\in F}y_{t}$: if $\phi\neq F\subseteq\{0,1,....,r\}$,
$k=minF$, and for each $t\in F$, $y_{t}\in H_{t}\}$.

Given $b\in E_{k}$, we have that $b\in D_{k}^{\star}$ by hypothesis
$\textbf{(II)}$ and so $-b+D_{k}^{\star}\in p$. If $d\in G_{l-1}$,
then by $\textbf{(IV)}$, $-d+A\in a_{l}.p$ so that $a_{l}^{-1}(-d+A)\in p$.
If $m\in\{0,1,...,l-2\}$ and $d\in G_{m}$, then by $\textbf{(IV)}$,
$-d+A\in a_{m+1}.p+a_{m+2}.p+...+a_{l}.p$ so that

$\{x\in S:-a_{m+1}.x+(-d+A)\in a_{m+2}.p+a_{m+3}.p+...+a_{l}.p\}\in p$.

Thus we have that $D_{n}\in p$, where 

$D_{n}=D_{n-1}\cap\bigcap_{k=0}^{n-1}\bigcap_{b\in E_{k}}(-b+D_{k}^{\star})\cap\bigcap_{d\in G_{l-1}}a_{l}^{-1}(-d+A)$\\
 $\cap\bigcap_{m=0}^{l-2}\bigcap_{d\in G_{m}}\{x\in S:-a_{m+1}.x+(-d+A)\in a_{m+2}.p+a_{m+3}.p+...+a_{l}.p\}$.

(Here, if say $l=1$ or $n<l$ , we are using the convention that
$\bigcap\emptyset=\mathbb{N}$) 

Pick, again by Theorem \ref{IprCntrlEq} , $x_{\alpha_{n}},x_{\alpha_{n}+1},.....,x_{\alpha_{n+1}-1}\in S$
such that 
\[
B_{n}\left(\begin{array}{c}
x_{\alpha_{n}}\\
x_{\alpha_{n}+1}\\
\vdots\\
x_{\alpha_{n+1}-1}
\end{array}\right)\in\left(D_{n}^{*}\right)^{u_{n}}.
\]

Let $H_{n}$ be the set of entries of 
\[
B_{n}\left(\begin{array}{c}
x_{\alpha_{n}}\\
x_{\alpha_{n}+1}\\
\vdots\\
x_{\alpha_{n+1}-1}
\end{array}\right)
\]

Then hypotheses $\textbf{(I)}$, $\textbf{(III)}$, $\textbf{(V)}$,
and $\textbf{(VI)}$ hold directly.

To verify hypothesis $\textbf{(II)}$, let $\emptyset\neq F\subseteq\{0,1,....,r\}$,
let $k=minF$, and for each $t\in F$, let $y_{t}\in H_{t}$. If $n$
does not belong to $F$, then $\sum_{t\in F}y_{t}\in D_{k}^{\star}$
by hypothesis $\textbf{(II)}$ at $n-1$ , so assume that $n\in F$.
If $F=\{n\}$, then we have that $y_{n}\in D_{n}^{\star}$ directly
so assume that $F\neq\{n\}$. Let $b=\sum_{t\in F\setminus\{n\}}y_{t}$.
Then $b\in E_{k}$ and so $y_{n}\in-b+D_{k}^{\star}$ and thus $b+y_{n}\in D_{k}^{\star}$
as required.

To verify hypothesis $\textbf{(IV)}$, let $m\in\{0,1,....,l-1\}$
and $F_{0},F_{1},....,F_{m}$ are nonempty subsets of $\{0,1,...,n\}$,
such that for each $i\in\{0,1,...,m-1\}$, $maxF_{i}<minF_{i+1}$,
and for each $t\in\bigcup_{i=0}^{m}F_{i}$, $y_{t}\in H_{t}$. If
$m=0$ , then $\sum_{t\in F_{0}}y_{t}\in D_{0}^{\star}$ by $\textbf{(II)}$
and $\textbf{(III)}$ so that $-a_{0}.\sum_{t\in F_{0}}y_{t}+A\in a_{1}.p+a_{2}.p+...+a_{l}.p$
as required.

So assume that $m>0$. Let $k=minF_{m}$ and $j=maxF_{m-1}$. Then

$\sum_{t\in F_{m}}y_{t}\in D_{k}^{\star}$ by $\textbf{(II)}$

$\subseteq D_{j+1}$ by $\textbf{(III)}$

$\subseteq\{x\in S:-a_{m}.x+(-\sum_{t=0}^{m-1}a_{i}.\sum_{t\in F_{i}}y_{t}+A)\in a_{m+1}.p+a_{m+2}.p+...+a_{l}.p\}$
by $\textbf{(VI)}$ 

as required.

The induction being complete, we claim that whenever $F_{0},F_{1},....,F_{l}$
are nonempty subsets of $\omega$ such that for each $i\in\{0,1,...,l-1\}$,
$maxF_{i}<minF_{i+1}$, and for each $t\in\bigcup_{i=0}^{m}F_{i}$,
$y_{t}\in H_{t}$, then $-\sum_{i=0}^{l}a_{i}.\sum_{t\in F_{i}}y_{t}\in A$.
To see this , let $k=minF_{l}$ and let $j=maxF_{l-1}$. Then $\sum_{t\in F_{l}}y_{t}\in D_{k}^{\star}\subseteq D_{j+1}\subseteq a_{l}^{-1}(-\sum_{t=0}^{l-1}a_{i}.\sum_{t\in F_{i}}y_{t}+A)$
by hypothesis $\textbf{(V)}$, and so $\sum_{i=0}^{l}a_{i}.\sum_{t\in F_{i}}y_{t}\in A$
as claimed.

Let $Q$ be an insertion matrix of $\langle B_{t}\rangle_{t<\omega}$
into $C$. We claim that all entries of $Q\vec{x}$ are in $A$. To
see this , let $\gamma<\omega$ be given and let $j\in\times_{t<\omega}\{0,1,...,u_{t}-1\}$,
so that

$c_{\gamma,0}.\vec{b_{j(0)}}^{(0)}\frown c_{\gamma,1}.\vec{b}_{j(1)}^{(1)}\frown...$is
a row of $Q$ , say row $\delta$. For each $t\in\{0,1,...,m\}$ ,
let $y_{t}=\sum_{k=0}^{v_{t}-1}b_{j(t),k}^{(t)}.x_{\alpha_{t}+k}$
(so that $y_{t}\in H_{t}$). Therefore we have $\sum_{q=0}^{\infty}q_{\delta,s}.x_{s}=\sum_{t=0}^{m}c_{\gamma,t}.y_{t}$.
Choose nonempty subsets $F_{0},F_{1},....,F_{l}$ of $\{0,1,...,m\}$
such that for each $i\in\{0,1,...,l-1\}$, $maxF_{i}<minF_{i+1}$,
and for each $t\in F_{i}$, $c_{\gamma,t}=a_{i}$. (One can do this
because $C$ is a Milliken -Taylor matrix for $\vec{a}$.) Then $\sum_{t=0}^{m}c_{\gamma,t}.y_{t}=\sum_{i=0}^{l}a_{i}.\sum_{t\in F_{i}}y_{t}\in A$.
\end{proof}

\section{Some infinite Centrally image partition regularity of matrices near
any idempotent }
\begin{defn}
Let $M$ be an $\omega\times\omega$ matrix with entries from $\mathbb{Q}$.
Then $M$ is a segmented image partition regular matrix if and only
if 
\begin{enumerate}
\item no row of $M$ is row is $\vec{0}$; 
\item for each $i\in\omega$, $\{j\in\omega:a_{i,j}\neq0\}$ is finite;
and 
\item there is an increasing sequence $\langle\alpha_{n}\rangle_{n=0}^{\infty}$
in $\omega$ such that $\alpha_{0}=0$ and for each $n\in\omega$,\\
 $\{\langle a_{i,\alpha_{n}},a_{i,\alpha_{n}+1},a_{i,\alpha_{n}+2},\ldots,a_{i,\alpha_{n+1}-1}\rangle:i\in\omega\}\setminus\{\vec{0}\}$\\
 is empty or is the set of rows of a finite image partition regular
matrix.
\end{enumerate}
\end{defn}

If each of these finite image partition regular matrices is a first
entries matrix, then $M$ is a segmented first entries matrix. If
also the first nonzero entry of each $\langle a_{i,\alpha_{n}},a_{i,\alpha_{n}+1},a_{i,\alpha_{n}+2},\ldots,a_{i,\alpha_{n+1}-1}\rangle$,
if any, is 1, then $M$ is a monic segmented first entries matrix.
\begin{defn}
Let $M$ be an $\omega\times\omega$ matrix with entries from $\mathbb{Q}$
and let $S$ be a dense subsemigroup of a semitopological semigroup
$\left(T,\,+\right)$. Given an idempotent $e\in E\left(T\right)$,
we say $M$ is \emph{centrally image partition regular near $e$ }if
and only if whenever $C$ is a central set near $e$ in $S$, there
exists $\vec{x}\in S^{\omega}$ such that $M\vec{x}\in C^{\omega}$.
\end{defn}
Next, we show that segmented image partition regular matrices hold
good for central image partition regularity too.
\begin{thm}
Let $S$ be a dense subsemigroup of $T$ such that for any $c\in\mbox{\ensuremath{\mathbb{N}}},$$cS=\left\{ c-times\, s\colon s\in S\right\} $
is an $IP^{*}$set. Let $M$ be a segmented image partition regular
matrix over $\mathbb{N}$. Then $M$ is \emph{centrally image partition
regular }for any $e-\mbox{central}$ set. \end{thm}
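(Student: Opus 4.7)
The plan is to construct $\vec{x}\in S^\omega$ block by block using the column partition $\langle\alpha_n\rangle_{n=0}^\infty$ provided by the segmented structure of $M$. For each $n$ for which the set of nonzero truncations is nonempty, let $B_n$ denote the finite image partition regular matrix whose rows are exactly the nonzero vectors $\langle a_{i,\alpha_n},\ldots,a_{i,\alpha_{n+1}-1}\rangle$; the blocks where this set is empty contribute nothing and can be skipped. The goal is to choose the entries $x_{\alpha_n},\ldots,x_{\alpha_{n+1}-1}$ of $\vec{x}$ in block $n$ so that the finite vector $B_n\vec{x}_n$ lands in a carefully shrunk central set, and so that any finite selection of one entry per block (across the finitely many blocks a given row of $M$ touches) sums into $C$.

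First I would fix a minimal idempotent $p\in e_S^*$ with $C\in p$ and let $C^\star=\{x\in C:-x+C\in p\}$, so $C^\star\in p$ and $-x+C^\star\in p$ for every $x\in C^\star$. Then I would proceed by induction on $n$. Suppose $x_0,\ldots,x_{\alpha_n-1}$ have been chosen and we have recorded the finite sets $H_0,\ldots,H_{n-1}$ of entries of $B_0\vec{x}_0,\ldots,B_{n-1}\vec{x}_{n-1}$. Let
\[
E_{n-1}=\Bigl\{y_{n_1}+\cdots+y_{n_k}:n_1<\cdots<n_k<n,\; y_{n_j}\in H_{n_j}\Bigr\}\cup\{0\},
\]
a finite set, and assume inductively that $E_{n-1}\setminus\{0\}\subseteq C^\star$. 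Define
\[
D_n=\bigcap_{b\in E_{n-1}}(-b+C^\star).
\]
Since each $b\in E_{n-1}$ lies in $C^\star$ (or equals $0$), $-b+C^\star\in p$, so $D_n\in p$, and therefore $D_n^\star\in p$ is central near $e$. Because $B_n$ is a finite image partition regular matrix over $\mathbb{N}$, Theorem \ref{IprCntrlEq} (whose equivalence (a)$\Rightarrow$(c) applies thanks to the hypothesis that $cS$ is an $IP^\star$-set for every first entry $c$) delivers $\vec{x}_n\in S^{v_n}$ with $B_n\vec{x}_n\in(D_n^\star)^{u_n}$. Setting $H_n$ to be the set of entries of $B_n\vec{x}_n$, every $h\in H_n$ satisfies $h\in D_n^\star\subseteq -b+C^\star$ for all $b\in E_{n-1}$, so $b+h\in C^\star$; this extends the inductive hypothesis to $E_n$.

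With $\vec{x}=\langle x_j\rangle_{j=0}^\infty\in S^\omega$ constructed, I would verify that $M\vec{x}\in C^\omega$. Fix a row index $i$; by condition (b) of the segmented definition the row has finite support, so it meets only finitely many blocks $n_1<\cdots<n_k$. The contribution from block $n_j$ is a single entry of $B_{n_j}\vec{x}_{n_j}$, namely some $y_{n_j}\in H_{n_j}$, and hence $(M\vec{x})_i=y_{n_1}+\cdots+y_{n_k}\in C^\star\subseteq C$ by the inductive conclusion. This establishes that $M$ is centrally image partition regular near $e$.

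The main obstacle is the inductive bookkeeping that keeps each $D_n$ in $p$ while simultaneously guaranteeing that the finite image partition regular matrix $B_n$ can be solved inside the central set $D_n^\star$. Both issues are handled by the same mechanism: the shift-and-intersect trick with $C^\star$ makes $D_n$ a member of a minimal idempotent (hence central near $e$), and Theorem \ref{IprCntrlEq} then says every central set near $e$ meets the image of every finite image partition regular matrix. The $IP^\star$ assumption on $cS$ is used exactly at this last step, via Lemma \ref{thm:central_img}, to certify that the equivalence between IPR/$S$ and centrally IPR near $e$ is available for each $B_n$.
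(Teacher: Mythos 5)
Your proposal is correct and follows essentially the same route as the paper: a block-by-block induction along $\langle\alpha_n\rangle$, a minimal idempotent $p$ with $C\in p$, the star-set shift trick $-y+C^{\star}\in p$, and the finite centrally-image-partition-regular result (Lemma \ref{thm:central_img} via Theorem \ref{IprCntrlEq}, using the $IP^{\star}$ hypothesis on $cS$) to solve each finite block inside a member of $p$. The only difference is bookkeeping --- you intersect over the finite set $E_{n-1}$ of all one-entry-per-block sums, while the paper tracks the partial row values $y_j$ of the cumulative matrices $B_m=(M_0\,M_1\cdots M_m)$ --- and both versions are sound.
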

\begin{proof}
Let $\vec{c}_{0},\vec{c}_{1},\vec{c}_{2},\ldots$ denote the columns
of $M$. Let $\langle\alpha_{n}\rangle_{n=0}^{\infty}$ be as in the
definition of a segmented image partition regular matrix. For each
$n\in\omega$, let $M_{n}$ be the matrix whose columns are $\vec{c}_{\alpha_{n}},\vec{c}_{\alpha_{n}+1},\ldots,\vec{c}_{\alpha_{n+1}-1}$.
Then the set of non-zero rows of $M_{n}$ is finite and, if nonempty,
is the set of rows of a finite image partition regular matrix. Let
$B_{n}=(M_{0}$ $M_{1}\ldots M_{n})$.

Lemma 2.3 of \cite{akbari2012semigroup} shows that $e_{S}^{*}$ is
a compact right topological semigroup so that we can choose an minimal
idempotent $p\in e_{S}^{*}$. Let $C\subseteq S$ such that $C\in p$.
Let $C^{*}=\{x\in C:-x+C\in p\}$.Then $C^{*}\in p$. We claim that
$-x+C^{*}\in p$. This can be proved as a consequence of Theorem 4.12
of \cite{hindman1998algebra}. Now the set of non-zero rows of $M_{n}$
is finite and, if nonempty, is the set of rows of a finite image partition
regular matrix over $\mathbb{N}$. As a result of Theorem 15.24 of
\cite{hindman1998algebra}, there exist $m\in\mathbb{N}$ and a $u\times m$
matrix $D$ with entries from $\omega$ which satisfies the first
entries condition such that given any $\overrightarrow{y}\in\mathbb{N}^{m}$
there is some $\overrightarrow{x}\in\mathbb{N}$ with $M_{n}\overrightarrow{x}=D\overrightarrow{y}$.
Then by using Theorem \ref{thm:central_img}, we can choose $\vec{x}^{(0)}\in S^{\alpha_{1}-\alpha_{0}}$
such that, if $\vec{y}=M_{0}\vec{x}^{(0)}$, then $y_{i}\in C^{*}$
for every $i\in\omega$ for which the $i^{th}$ row of $M_{0}$ is
non-zero. We now make the inductive assumption that, for some $m\in\omega$,
we have chosen $\vec{x}^{(0)},\vec{x}^{(1)},\ldots,\vec{x}^{(1)}$
such that $\vec{x}^{(i)}\in S^{\alpha_{i+1}-\alpha_{i}}$ for every
$i\in\{0,1,2,\ldots,m\}$, and, if
\end{proof}
\[
\vec{y}=B_{m}\left(\begin{array}{c}
\vec{x}^{(0)}\\
\vec{x}^{(1)}\\
.\\
.\\
.\\
\vec{x}^{(m)}
\end{array}\right),
\]

then $y_{j}\in C^{*}$ for every $j\in\omega$ for which the $j^{th}$
row of $B_{m}$ is non-zero.

Let $D=\{j\in\omega$ : row $j$ of $B_{m+1}$ is not $\vec{0}\}$
and note that for each $j\in\omega,-y_{j}+C^{*}\in p$. Again by the
previous arguement, we can choose $\vec{x}^{(m+1)}\in S^{\alpha_{m+2}-\alpha_{m+1}}$
such that, if $\vec{z}=M_{m+1}\vec{x}^{(m+1)}$, then $z_{j}\in\bigcap_{t\in D}(-y_{t}+C^{*})$
for every $j\in D$.

Thus we can choose an infinite sequence $\langle\vec{x}^{(i)}\rangle_{i\in\omega}$
such that, for every $i\in\omega$, $\vec{x}^{(i)}\in S^{\alpha_{i+1}-\alpha_{i}}$,
and, if

\[
\vec{y}=B_{i}\left(\begin{array}{c}
\vec{x}^{(0)}\\
\vec{x}^{(1)}\\
.\\
.\\
.\\
\vec{x}^{(i)}
\end{array}\right),
\]

then $y_{j}\in C^{*}$ for every $j\in\omega$ for which the $j^{th}$
row of $B_{i}$ is non-zero.\\

Let 
\[
\vec{x}=\left(\begin{array}{c}
\vec{x}^{(0)}\\
\vec{x}^{(1)}\\
\vec{x}^{(2)}\\
\vdots
\end{array}\right)
\]

and let $\vec{y}=M\vec{x}$. We note that, for every $j\in\omega$,
there exists $m\in\omega$ such that $y_{j}$ is the $j^{th}$ entry
of

\[
B_{i}\left(\begin{array}{c}
\vec{x}^{(0)}\\
\vec{x}^{(1)}\\
.\\
.\\
.\\
\vec{x}^{(i)}
\end{array}\right)
\]

whenever $i>m$. Thus all the entries of $\vec{y}$ are in $C^{*}$.

\bibliographystyle{amsplain}
\bibliography{Paper}

\providecommand{\bysame}{\leavevmode\hbox to3em{\hrulefill}\thinspace}
\providecommand{\MR}{\relax\ifhmode\unskip\space\fi MR }
\providecommand{\MRhref}[2]{%
  \href{http://www.ams.org/mathscinet-getitem?mr=#1}{#2}
}
\providecommand{\href}[2]{#2}
\begin{thebibliography}{10}

\bibitem{akbari2012semigroup}
M~Akbari~Tootkaboni and T~Vahed, \emph{The semigroup of ultrafilters near an
  idempotent of a semitopological semigroup}, Topology and its Applications
  \textbf{159} (2012), no.~16, 3494--3503.

\bibitem{berglund1989analysis}
John~F Berglund, Hugo~Dietrich Junghenn, and Paul Milnes, \emph{Analysis on
  semigroups: Function spaces, compactifications, representations}, Wiley New
  York, 1989.

\bibitem{de2009image}
D.~De and N.~Hindman, \emph{Image partition regularity near zero}, Discrete
  Math. \textbf{309} (2009), no.~10, 3219--3232.

\bibitem{de2009sets}
D.~De, N.~Hindman, and D.~Strauss, \emph{{Sets central with respect to certain
  subsemigroups of $\beta S_{d}$}}, Topology Proceedings, vol.~33, 2009,
  pp.~55--79.

\bibitem{de2008universally}
D.~De and R.~Paul, \emph{Universally image partition regularity}, Electron. J.
  Combin. \textbf{15} (2008), no.~1, R141.

\bibitem{ellis1969lectures}
Robert Ellis, Robert Ellis, Robert Ellis, and Robert Ellis, \emph{Lectures on
  topological dynamics}, WA Benjamin New York, 1969.

\bibitem{furstenberg1981recurrence}
Harry Furstenberg and H~Furstenberg, \emph{Recurrence in ergodic theory and
  combinatorial number theory}, vol.~2, Princeton University Press Princeton,
  1981.

\bibitem{hilbert1933irreduzibilitat}
David Hilbert, \emph{{\"U}ber die irreduzibilit{\"a}t ganzer rationaler
  funktionen mit ganzzahligen koeffizienten}, Algebra{\textperiodcentered}
  Invariantentheorie{\textperiodcentered} Geometrie, Springer, 1933,
  pp.~264--286.

\bibitem{hindman1999semigroup}
N.~Hindman and I.~Leader, \emph{The semigroup of ultrafilters near 0},
  Semigroup Forum, vol.~59, Springer, 1999, pp.~33--55.

\bibitem{hindman1998algebra}
N.~Hindman and D.~Strauss, \emph{{Algebra in the Stone-{\v{C}}ech
  compactification: theory and applications}}, vol.~27, Walter de Gruyter,
  1998.

\bibitem{hindman2003image}
Neil Hindman, \emph{Image partition regularity over the reals}, New York J.
  Math \textbf{9} (2003), 79--91.

\bibitem{schur1916uber}
I~Schur, \emph{Uber die kongruenz(mod)+$\equiv$ m m m x y z p}, Jahresbericht
  der Deutchen Mathematiker Vereinigung \textbf{25} (1916), 114--117.

\bibitem{waerden1927van}
Bartel Leendert van~der Waerden, \emph{van der: Beweis einer baudetschen
  vermutung}, Nieuw Arch. Wiskd \textbf{15} (1927), 212--216.

\end{thebibliography}

\end{document}